\theoremstyle{plain}
\newtheorem{theorem}{Theorem}[section]
\newtheorem{lemma}[theorem]{Lemma}
\numberwithin{equation}{section}
\theoremstyle{definition}
\newtheorem{definition}[theorem]{Definition}
\newtheorem{example}[theorem]{Example}
\newtheorem{proposition}[theorem]{Proposition}
\newtheorem{remark}[theorem]{Remark}
\theoremstyle{remark}
\newcommand{\CC}{\mathbb{C}}
\newcommand{\QQ}{\mathbb{Q}}
\newcommand{\RR}{\mathbb{R}}
\newcommand{\ZZ}{\mathbb{Z}}
\newcommand{\NN}{\mathbb{N}}
\newcommand{\WW}{\mathbb{W}}
\begin{document}

\title[Equivariant $K$-theory and cobordism of GKM spaces]{Integral equivariant $K$-theory and cobordism ring of simplicial GKM orbifold complexes}

\author[K Brahma]{Koushik Brahma}
\address{Department of Mathematics, Indian Institute of Technology Madras, India}
\email{koushikbrahma95@gmail.com}

\author[S. Sarkar]{Soumen Sarkar}
\address{Department of Mathematics, Indian Institute of Technology Madras, India}
\email{soumen@iitm.ac.in}

\subjclass[2020]{57R18, 05C10, 55U10, 19L47, 18G85, 57R85}

\keywords{GKM orbifold, simplicial GKM orbifold complex, simplicial GKM graph complex, Orbifold $G$-vector bundle, equivariant Thom isomorphism, filtration of a regular graph, buildable GKM orbifold, equivariant $K$-theory, equivariant cobordism ring.}

\date{\today}
\dedicatory{}

\abstract In this paper, we define `simplicial GKM orbifold complexes' and study some of their topological properties. We introduce the concept of filtration of regular graphs and `simplicial graph complexes', which have close relations with simplicial GKM orbifold complexes. We discuss the necessary conditions to confirm an invariant $q$-CW complex structure on a simplicial GKM orbifold complex. We introduce `buildable' and `divisive' simplicial GKM orbifold complexes. We show that a buildable simplicial GKM orbifold complex is equivariantly formal, and a divisive simplicial GKM orbifold complex is integrally equivariantly formal. We give a combinatorial description of the integral equivariant cohomology ring of certain simplicial GKM orbifold complexes. We prove the Thom isomorphism theorem for orbifold $G$-vector bundles for equivariant cohomology and equivariant $K$-theory with rational coefficients. We extend the main result of Harada-Henriques-Holm (2005) to the category of $G$-spaces equipped with `singular invariant stratification'. 
We compute the integral equivariant cohomology ring, equivariant $K$-theory ring and equivariant cobordism ring of divisive simplicial GKM orbifold complexes. We describe a basis of the integral generalized equivariant cohomology of a divisive simplicial GKM orbifold complex. 
\endabstract

\maketitle


\section{Introduction}
Group actions on topological spaces provide rich structures of some classes of manifolds. Examples of such actions include torus actions on symplectic manifolds \cite{Aud, Dus}, locally standard torus actions on even-dimensional manifolds \cite{DJ}, torus action on GKM manifolds \cite{GKM98}. The above references mainly consider manifolds with torus actions. Moreover, there are `nice' topological spaces with a `similar type' of torus actions which may not be manifold. Let $M_1$, $M_2$ be two manifolds in one of the above categories and $f_1 \colon A \hookrightarrow M_1$ and $f_2 \colon A \hookrightarrow M_2$ be two equivariant embeddings. Then one can construct the pushout $(M_1\cup_{A}M_2)$ by the following commutative diagram.
\[ \begin{tikzcd}
	A \arrow{r}{f_1} \arrow{d}{f_2} & M_1 \arrow{d} \\%
	M_2 \arrow{r} & (M_1\cup_{A}M_2).
\end{tikzcd}
\]

The space $(M_1\cup_{A}M_2)$ is not a manifold in general. Of course, one can make this definition in any category. However, we restrict this definition to the category of GKM orbifolds. Observing this construction and the concept of simplicial complexes, we introduce `simplicial GKM orbifold complexes'. The word GKM originated after the pioneering work of \cite{GKM98}, which describes the $T$-equivariant cohomology ring $H_T^{*}$ of any smooth projective toric variety in terms of combinatorial data obtained from its 1-dimensional (complex) orbit structure. Here $T$ is a compact abelian torus. Subsequently, several papers extended their result to `nice' stratified spaces see, for example, \cite{GZ01}, \cite{HHH}, and \cite{DKS}. We note that the class of simplicial GKM orbifold complexes contain the weighted projective spaces \cite{Ka}, the toric orbifolds \cite[Section 7]{DJ}, the projective toric varieties \cite{SaSo} and the weighted Grassmann orbifolds \cite{BS_Gra}.

Let $G$ be a topological group and $X$ a $G$-space. The fixed point set of the $G$-action on $X$ is defined by $X^G:=\{x \in X ~|~ gx=x ~\mbox{for all} ~ g \in G\}$. If $Y \subseteq X$ is an invariant subset, then the subgroup $G_Y= \{g \in G ~|~ gy=y; \text{ for all } y\in Y\}$ is called the isotropy group of $Y$. For $x \in X$, the set $G{\cdot}x=\{gx \in X ~|~ g \in G\}$ is called the orbit of $x$. 

The diagonal $G$-action on $EG \times X$ defined by $g(\mathfrak{e}, x) \mapsto (\mathfrak{e}g^{-1}, g x)$ is free, and the orbit space $(EG \times X)/G$ has a cell-structure if $X$ has so. The Borel equivariant cohomology of the $G$-space $X$ is defined by the cohomology of $X_G:= (EG \times X)/G$. That is $$H^{\ast}_G(X;R) :=H^{\ast}(X_G;R),$$
where $R$ is a commutative ring. The free action of $G$ on $EG$ and the $G$-equivariant projection $EG \times X \to EG$ induce the following fiber bundle: 
$$X \xrightarrow{\iota} X_G \xrightarrow{\pi} BG.$$ Therefore, one has $$H^{\ast}(BG;\QQ) \xrightarrow{\pi^{*}} H^{\ast}(X_G;\QQ) \xrightarrow{\iota^{*}} H^{\ast}(X;\QQ).$$
If $\iota^*$ is a surjective map, then the $G$-space $X$ is called an equivariantly formal space. One can also define integrally equivariantly formal space $X$ if the above condition is true with integer coefficients. Note that if $H^{odd}(X;\ZZ)=0$ and $H^*(X;\ZZ)$ is torsion free then $X$ is integrally equivariantly formal. The readers are referred to \cite{May} for details and results on $G$-equivariant cohomology.

Let $X$ be a compact $G$-space. The set of all isomorphism classes of complex $G$-vector bundles on $X$ is an abelian semigroup under the direct sum operation. Let $K_G(X)$ be the associated Grothendieck group. The tensor product of $G$-vector bundles induces a multiplication structure on $K_G(X)$. Then $K_G(X)$ is a commutative ring with a unit. If $X$ is a point $\{pt\}$, then $K_G(\{pt\})=R(G)$, the representation ring of $G$, {see \cite{Hus}}. 

Let $X^{+}$ be the one-point compactification of a locally compact $G$-space $X$. 
 If $X$ is already a compact space then $X^{+}=X \sqcup \{pt\}$. Denote $K_G^0(X) :=\widetilde{K}_G(X^{+})$. For any $n\in \NN$, one defines 
$$K_G^{-n}(X) :=\widetilde{K}_G(S^n X^{+}),$$ see \cite{Seg}.
Thus one can obtain a cohomology theory of Eilenberg-Steenrod \cite{EiSt} known as the equivariant $K$-theory and denoted by $K_G^{*}(X)$ for a locally compact $G$-space $X$. If $X$ is a point then $K_G^{*}(\{pt\})=R(G)[z,z^{-1}]$, where $z$ denotes the Bott periodicity element having cohomological dimension $-2$.

The $G$-equivariant ring $MU^*_G(X)$ is known as the equivariant complex cobordism ring, see \cite{TD}. Sinha \cite{Sinha} and Hanke \cite{Han} have shown several developments on $MU^*_G$. However, many interesting questions on $MU^*_G(X)$ remain undetermined. For example, $MU^*_G(\{pt\})$ is only partially known for a nontrivial group $G$.

The equivariant map $X \rightarrow \{pt\}$ induces a graded $\mathcal{E}_{G}^*(\{pt\})$-algebra structure on $\mathcal{E}_{G}^*(X)$ where $\mathcal{E}_{G}^*$ represents the generalized $G$-equivariant cohomology theory. 
 In \cite{HHH}, the authors have calculated the generalized $G$-equivariant cohomology ring $\mathcal{E}_G^{*}(X)$ and a module generator of $\mathcal{E}_G^{*}(X)$ of a class of equivariant stratified $G$-spaces $X$. So, the following question arises. How to calculate the generalized $G$-equivariant cohomology ring of a GKM orbifold and, more generally, of a GKM orbifold simplicial complex? This question is partially answered in \cite{SaSo} for toric varieties associated with almost simple lattice polytopes. In this paper, we study the integral generalized $G$-equivariant cohomology theory $\mathcal{E}_G^{*}$ (such as $H^*_G$, $K^*_G$, and $MU^*_G$) of simplicial GKM orbifold complexes under some hypothesis arising from the combinatorial structure of its 1-dimensional (complex) invariant subsets called spindles in \cite{GZ01}.

The paper is organized as follows.
 In Section \ref{sec_simp_gkm_comp}, we recall regular graphs and define `simplicial graph complexes'. Then, we introduce the concept of filtration of regular graphs and simplicial graph complexes. We prove that if $\Gamma$ is a connected regular graph, then there always exists a filtration of $\Gamma$, see Proposition \ref{prop:filt_reg_gh}.  

We rewrite the definition of GKM orbifolds. We describe a process of constructing certain invariant equivariantly contractible subsets of a GKM manifold. If these subsets induce a $T$-invariant stratification of a GKM manifold, then we call the GKM manifold buildable, see Definition \ref{def_bul_gkm_mfd}. We give several examples of buildable GKM manifolds. We prove that the integral cohomology of a buildable GKM manifold has no torsion and is concentrated in even degrees, see Theorem \ref{thm_eq_formal}. 
Then, we define `simplicial GKM orbifold complexes' in Definition \ref{def_gkm_simp_orbifold} extending the concept of simplicial complexes and GKM orbifolds. We discuss several examples of simplicial GKM orbifold complexes which are not GKM orbifolds. We construct a $T$-invariant stratification of a simplicial GKM orbifold complex. We introduce a `buildable simplicial GKM orbifold complex' and a `divisive simplicial GKM orbifold complex', see Definition \ref{def_filt_simp_gkm}. 
We show that a buildable simplicial GKM orbifold complex is equivariantly formal, and a divisive simplicial GKM orbifold complex is integrally equivariantly formal.

 We define a simplicial GKM graph complex and its equivariant cohomology extending the concept of the GKM graph and its equivariant cohomology following \cite[Section 1.7]{GZ01}. We give an example of a non-trivial simplicial GKM graph complex. We describe how one can obtain a simplicial GKM graph complex $\mathcal{G}_{\mathcal{K}}$ from a simplicial GKM orbifold complex $\mathcal{K}$. Under some hypotheses, we prove that the equivariant cohomology ring $H_{T}^{*}(\mathcal{K};\mathbb{Z})$ is isomorphic to $H_{T}^{*}(\mathcal{G}_{\mathcal{K}}, \alpha, \theta)$ as $H_{T}^{*}(\{pt\};\mathbb{Z})$ algebra, where $H_{T}^{*}(\mathcal{G}_{\mathcal{K}}, \alpha, \theta)$ is the equivariant cohomology of the simplicial GKM graph complex $\mathcal{G}_{\mathcal{K}}$. We note that this result generalizes \cite[Theorem 2.9]{DKS}.

In Section \ref{sec_thom_iso}, we briefly discuss orbifold $G$-vector bundles for a topological group $G$. Then, we prove the Thom isomorphism for equivariant cohomology and equivariant $K$-theory for an orbifold $G$-vector bundle, see Theorem \ref{prop_thm_global}. This result generalizes the equivariant Thom isomorphism studied in \cite[Section 2]{SaSo}. 
 
 In Section \ref{sec_gen_cohom}, we extend the main result \cite[Theorem 2.3]{HHH} to the category of $G$-spaces equipped with a stratification $$\{pt\} = X_0 \subset X_1 \subset X_2  \subset \cdots, $$ where $X = \cup_{j \geq 0} X_j$ and $X_j\setminus X_{j-1}$ is equivariantly homeomorphic to an orbifold $G$-vector bundle for $j\geq 1 $, see Proposition \ref{prop_gen_cohom}. Under a hypothesis on this stratification, we can give a presentation of the equivariant cohomology and the equivariant $K$-theory ring of $X$, see Proposition \ref{prop:equivariant_k-theory}. We describe the equivariant cohomology and equivariant $K$-theory ring of a buildable simplicial GKM orbifold complex with rational coefficients, see Theorem \ref{thm:buildable_gkm}. For `divisive' simplicial GKM orbifold complexes, we compute their equivariant cohomology, equivariant $K$-theory, and equivariant cobordism ring with integer coefficients, see Theorem \ref{thm:buildable_gkm_2}. In Example \ref{ex:equi_cohom_gkmplx}, we explicitly describe the ring $\mathcal{E}_T^*(\mathcal{K};\ZZ)$ for a divisive simplicial GKM orbifold complex $\mathcal{K}$. We discuss some $\mathcal{E}_T^*(\{pt\};\ZZ)$-module generators of these equivariant cohomologies of divisive simplicial GKM orbifold complexes, see Lemma \ref{lem_module1} and Theorem \ref{thm_module2}.

\section{Graphs, filtrations and Simplicial GKM orbifold complexes}\label{sec_simp_gkm_comp}

In this section, we introduce simplicial graph complexes and simplicial GKM orbifold complexes following the concept of simplicial complexes. We construct a filtration of a regular graph and talk about the filtration of a simplicial graph complex. This filtration and some differential geometric properties may induce an invariant stratification on the simplicial GKM orbifold complexes. This facilitates the introduction of buildable and divisive simplicial GKM orbifold complexes. Then we prove that buildable simplicial GKM orbifold complexes are equivariantly formal and divisive simplicial GKM orbifold complexes are integrally equivariantly formal.
We define the equivariant cohomology of simplicial  GKM graph complexes and compare it with the equivariant cohomology of simplicial GKM orbifold complexes.

\subsection{Filtration of a simplicial graph complex}
 In this subsection, we define simplicial graph complexes and their filtrations. Then we study some properties of these filtrations. 
 
 Let $V$ be a non-empty set and $E$ a non-empty subset of $V \times V$ such that an element $e\in E$ can be written as $e=(x,y)$ for some $x,y\in V$ and $x\neq y$. Then the pair $(V, E)$ is called a graph. 
The element $e $ is called the edge incident to the vertices $x$ and $y$, and the vertices $x$ and $y$ are called adjacent vertices.  
The degree of a vertex is defined by the number of edges that are incident to the vertex. A graph is called a regular graph of degree $n$ if the degree of all vertices is $n$. For example, the vertices and the edges of an $n$-dimensional simple polytope form a regular graph of degree $n$.

We construct a filtration of a connected regular graph. Let $\Gamma=(V, E)$ be a connected regular graph of degree $n$, where $V$ is the vertices and $E$ is the edges of $\Gamma$. Let ${b_0\in V}$, $V_0=\{b_0\}$ and $\Gamma_0:=(V_0,E_0)$ where $E_0=\emptyset$. Next, we consider a vertex $b_1\in V \setminus V_0$ which is adjacent to $b_0$. Let $V_1=\{b_0,b_1\}$ and $E_1$ be the edge joining $b_0$ and $b_1$. Define $\Gamma_1:=(V_1,E_1)$. Suppose, inductively, we define $\Gamma_k:=(V_k,E_k)$ where $V_k=\{b_0,b_1,\dots,b_k\}$ and $E_k$ is the set of edges in $E$ whose vertices are in $V_k$. Let $V \setminus V_k$ be nonempty and $$k':=\min \{\ell\in \{0,1,2,\dots,k\}~|~b_{\ell}~ \mbox{is adjacent to a vertex in}~ V \setminus V_k\}.$$ Since $\Gamma$ is a connected regular graph, there exists a $b_{k+1}\in V\setminus V_k$ satisfying that $b_{k+1}$ is adjacent to $b_{k'}$.
  Let $V_{k+1}:=\{b_0,b_1,\dots,b_k,b_{k+1}\}$ and $E_{k+1}$ be the set of edges in $E$ whose vertices are in $V_{k+1}$. So
\begin{equation}\label{ret_mfd}
 E_{k+1}:=\{e\in E~|~V(e)\subset V_{k+1}\}=E_k\sqcup \{e\in E~|~b_{k+1}\in V(e)\subset V_{k+1}\}.
\end{equation}
Then define $\Gamma_{k+1}:=(V_{k+1},E_{k+1})$.
This process stops when there are no remaining vertices. Therefore
\begin{equation}\label{fil_graph}
	\{pt\}= \Gamma_0\subset \Gamma_1\subset \cdots \subset \Gamma_m= \Gamma 
\end{equation}
 gives a filtration of $\Gamma$, since $\Gamma$ is a connected graph, where $m+1=|V|$. Note that \eqref{fil_graph} induces an ordering on the vertices of the graph $\Gamma$. Therefore, one gets the following. 
 
\begin{proposition}\label{prop:filt_reg_gh}
If $\Gamma$ is a connected regular graph, then there always exists a filtration of $\Gamma$ as in \eqref{fil_graph}.
\end{proposition}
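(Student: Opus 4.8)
The statement is essentially a matter of verifying that the inductive construction described immediately above is well-defined and terminates, so the plan is to check these two points rather than to introduce new machinery. There is no new idea to supply beyond reading off the construction; the only genuine verification concerns a single adjacency claim.

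First I would confirm that the construction is valid at every step, i.e.\ that whenever $V \setminus V_k$ is nonempty the index $k'$ is well-defined and a vertex $b_{k+1}$ with the required adjacency exists. This is the one place where a hypothesis is genuinely used, and the relevant one is \emph{connectedness}: if no vertex of $V_k$ were adjacent to any vertex of $V \setminus V_k$, then $E$ would contain no edge with one endpoint in $V_k$ and the other in $V \setminus V_k$, so the partition $V = V_k \sqcup (V \setminus V_k)$ into two nonempty sets would exhibit $\Gamma$ as disconnected, a contradiction. Hence the set defining $k'$ is nonempty, $k'$ is well-defined, and we may select $b_{k+1} \in V \setminus V_k$ adjacent to $b_{k'}$. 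This is the step I would flag as the heart of the argument, though it is short.

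Next I would observe that the chain is genuinely increasing and that the process stops. Each stage adjoins exactly one new vertex, so $V_0 \subsetneq V_1 \subsetneq \cdots$ with $|V_k| = k+1$, and the nesting $\Gamma_k \subset \Gamma_{k+1}$ is immediate from $V_k \subset V_{k+1}$ together with the edge-set identity \eqref{ret_mfd}. Since the vertex set is finite with $|V| = m+1$, after $m$ steps one has $V_m = V$, hence $E_m = E$ and $\Gamma_m = \Gamma$, which yields precisely the filtration \eqref{fil_graph}.

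Finally, I would remark that regularity of $\Gamma$ plays no role in the \emph{existence} of the filtration---connectedness and finiteness alone suffice---so regularity here is part of the standing hypothesis rather than an ingredient of this particular argument. In summary, the proof is simply the assertion that the construction preceding the proposition does what it claims, with the adjacency step above supplying the only nontrivial justification.
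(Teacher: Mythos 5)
Your proposal is correct and follows essentially the same route as the paper, which simply asserts that the inductive construction preceding the proposition terminates and uses connectedness to guarantee the existence of $b_{k+1}$ at each step. Your explicit cut argument for the adjacency claim and the observation that regularity is not needed are accurate refinements of what the paper leaves implicit.
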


 \begin{figure}
 	\begin{tikzpicture}[scale=.35]

 		\node at (2.5,-4) {$\bullet$};
 		\node at (2.5,-3) {$b_0$};
 	 	\node at (2.5,-7) {$\Gamma_0$};

 		\begin{scope}[xshift=115]
 			\draw (2.5,-5)--(0,0);
 			\node at (2.5,-5) {$\bullet$};
 			\node at (0,0) {$\bullet$};
 		\node at (3,-4) {$b_0$};
 				\node at (1,1) {$b_1$};
 	 	\node at (2.5,-7) {$\Gamma_1$};

 	\end{scope}
 			\begin{scope}[xshift=230]
 			\draw (2.5,-5)--(0,0);
 			\draw (2.5,-5)--(3,-2);
 			\draw (0,0)--(3,-2);
 			\node at (2.5,-5) {$\bullet$};
 			\node at (0,0) {$\bullet$};
 		\node at (3,-2) {$\bullet$};
 		\node at (3.5,-4.5) {$b_0$};
 				\node at (1,1) {$b_1$};
 			\node at (4.2,-1.9) {$b_2$};
 	 	\node at (2.5,-7) {$\Gamma_2$};
 		\end{scope}
 		
 			\begin{scope}[xshift=395]
 				\draw (0,0)--(3,-2);
 			\draw (6,0)--(3,-2);
 		
 			\draw (2.5,-5)--(0,0);
 			\draw (2.5,-5)--(3,-2);
 			\draw (2.5,-5)--(6,0);
 				\node at (2.5,-5) {$\bullet$};
 			\node at (0,0) {$\bullet$};
 		\node at (3,-2) {$\bullet$};
 		 		\node at (6,0) {$\bullet$};
 		 			\node at (3.6,-4.5) {$b_0$};
 				\node at (1,1) {$b_1$};
 			\node at (3.5,-.8) {$b_2$};
 			\node at (7,0) {$b_3$};
 	 	\node at (2.5,-7) {$\Gamma_3$};
 			\end{scope}
 			
 				\begin{scope}[xshift=660]
 				\draw (0,0)--(2,-2);
 			\draw (6,0)--(2,-2);
 			\draw  (0,0)--(4,2);
 			\draw  (6,0)--(4,2);
 			\draw[dashed] (2.5,-5)--(4,2);
 			\draw (2.5,-5)--(0,0);
 			\draw (2.5,-5)--(2,-2);
 			\draw (2.5,-5)--(6,0);
 				\node at (2.5,-5) {$\bullet$};
 			\node at (0,0) {$\bullet$};
 		\node at (2,-2) {$\bullet$};
 		\node at (6,0) {$\bullet$};
 	 \node at (4,2) {$\bullet$};	
 	 	\node at (3.6,-4.5) {$b_0$};
 	\node at (0,1) {$b_1$};
 			\node at (2,-.8) {$b_2$};
 			\node at (7,0) {$b_3$};
 			 \node at (4,3) {$b_4$};
 	 	\node at (2.5,-7) {$\Gamma_4$};
 			\end{scope}
 			
 			\begin{scope}[xshift=935]
 			\draw (0,0)--(2,-2);
 			\draw (6,0)--(2,-2);
 			\draw [dashed] (0,0)--(4,2);
 			\draw [dashed] (6,0)--(4,2);
 			\draw[dashed] (3.5,5)--(4,2);
 			\draw (3.5,5)--(0,0);
 			\draw (3.5,5)--(2,-2);
 			\draw (3.5,5)--(6,0);
 			\draw[dashed] (2.5,-5)--(4,2);
 			\draw (2.5,-5)--(0,0);
 			\draw (2.5,-5)--(2,-2);
 			\draw (2.5,-5)--(6,0);
 				\node at (2.5,-5) {$\bullet$};
 			\node at (0,0) {$\bullet$};
 		\node at (2,-2) {$\bullet$};
 		 		\node at (6,0) {$\bullet$};
 	 		 		\node at (4,2) {$\bullet$};	
 	 		 	\node at (3.5,5) {$\bullet$};
 \node at (3.6,-4.5) {$b_0$};
 				\node at (-0.9,1) {$b_1$};
 			\node at (1.7,-.8) {$b_2$};
 			\node at (7,0) {$b_3$};
 			 \node at (5.2,2.5) {$b_4$};
 			 \node at (3.5,6) {$b_5$};
 	 	\node at (2.5,-7) {$\Gamma_5$};
 			\end{scope}
     \end{tikzpicture}
     
 \caption{A filtration of a regular graph of degree 4.}  
\end{figure}
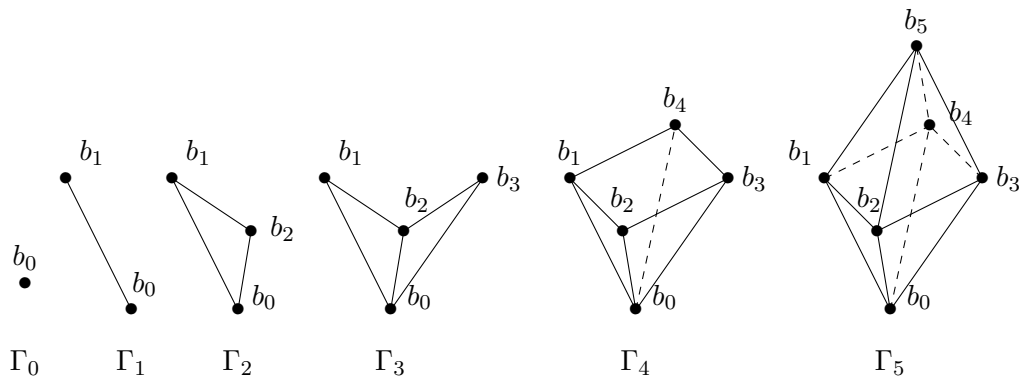

Next, we introduce simplicial graph complexes generalizing the notion of regular graphs. Let $(V, E)$ and $(V', E')$ be two graphs. We define the intersection $(V, E) \cap (V', E')$ is the graph $(V \cap V', E \cap E')$.

\begin{definition}\label{Regular graphical complex}
	Let $\mathcal{G} = \{(V_{\beta}, E_{\beta})\}_{\beta \in \Lambda}$ be a finite collection of regular graphs. We say that $\mathcal{G}$ is a simplicial graph complex if $(V_\beta, E_\beta), (V_\delta, E_\delta) \in \mathcal{G}$ and $(V_\beta, E_\beta) \cap (V_\delta, E_\delta) \neq \emptyset$ then $(V_\beta, E_\beta) \cap (V_\delta, E_\delta) \in \mathcal{G}$ and $(V_\beta, E_\beta) \cap (V_\delta, E_\delta)$ is a regular subgraph of both $(V_\beta, E_\beta)$ and $(V_\delta, E_\delta)$.
\end{definition}

If $\mathcal{G}$ is a simplicial graph complex, then denote  $$V(\mathcal{G}) := \bigcup_{\beta\in{\Lambda}} V_{\beta} ~\mbox{ and }~ E(\mathcal{G}):= \bigcup_{\beta\in{\Lambda}} E_{\beta}.$$
Then $(V(\mathcal{G}), E(\mathcal{G}))$ is a graph. For simplicity, we may denote this graph by $\mathcal{G}$ and call a simplicial graph complex if the structure is clear.

We can generalize the concept of the filtration of regular graphs to the simplicial graph complexes. Let $\mathcal{G} = \{(V_{\beta}, E_{\beta})\}_{\beta \in \Lambda}$ be a simplicial graph complex with the associated graph $(V(\mathcal{G}), E(\mathcal{G}))$. 
In this case, $\mathcal{G}_0$ and $\mathcal{G}_1 $ are defined similarly as in the first two steps in the construction of a filtration of a connected regular graph. Suppose inductively we can define $\mathcal{G}_k=(V_k,E_k)$ for $k\geq 1$. Let $V\setminus V_k$ be nonempty and
 $$k':=\min \{\ell\in \{0,1,2,\dots,k\} ~|~ b_{\ell}~ \mbox{is adjacent to a vertex in}~ V\setminus V_k\}.$$ Now we consider $b_{k+1}\in V\setminus V_k$ satisfying that $b_{k+1}$ is adjacent to $b_{k'}$. Let ${V_{k+1}:=V_k\sqcup\{b_{k+1}\}}$ and 
 \begin{equation}\label{ret_sim_cplx}
 	E_{k+1}:=E_k\sqcup \{e\in E ~|~ b_{k+1}\in V(e)\subset V_{k+1}\cap V_{\beta_k}~ \mbox{and}~ e\in E_{\beta_k}\},
\end{equation}
for a unique regular graph $(V_{\beta_k},E_{\beta_k})$ in $\mathcal{G}$. 
 Therefore, 
 \begin{equation}\label{eq_fil_sim_gp_cplx}
 \{pt\} =\mathcal{G}_0\subset \mathcal{G}_1\subset \cdots \subset \mathcal{G}_m.
 \end{equation}

 \begin{definition}\label{def_smp_gph_cplx}
If $\mathcal{G}_m=\mathcal{G}$ for some $m$ then \eqref{eq_fil_sim_gp_cplx} is called a filtration of the simplicial graph complex $\mathcal{G}$.
 \end{definition}

  Note that $E_{k+1}$ in \eqref{ret_mfd} and \eqref{ret_sim_cplx} may not be the same in general unless the simplicial graph complex contains only one regular graph. Also, a filtration of a simplicial graph complex defines an ordering on the vertices of the simplicial graph complex $\mathcal{G}$.
  
 \begin{example}
 	In Figure \ref{Fig_retraction sequence}, we give a filtration of simplicial graph complex $\mathcal{G}$ which is obtained by identifying an edge of the boundary of a triangle and a rectangle.

 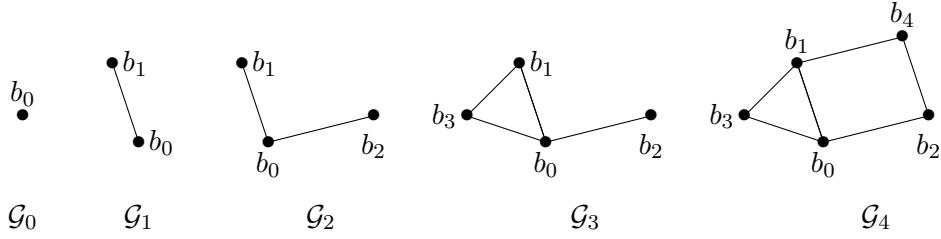
\begin{figure}
 	\begin{tikzpicture}[scale=0.35]
 	
 		  	\node at (0,0) {$\bullet$};
 		  \node[above] at (0,0) {$b_0$};
 		  \node[below] at (0,-3) {$\mathcal{G}_0$};

 		\begin{scope}[xshift=40]
 			
 			\draw (2,2)--(3,-1);
 			\node at (3,-1) {$\bullet$};
 			\node at (2,2) {$\bullet$};
 			\node[right] at (3,-1) {$b_0$};
 			\node[right] at (2,2) {$b_1$};
 			\node[below] at (3,-3) {$\mathcal{G}_1$};
 		
 		\end{scope}
 		
 		\begin{scope}[xshift=180]
 			
 			\draw (2,2)--(3,-1);
 			\draw (3,-1)--(7,0)--cycle;
 			\node at (7,0) {$\bullet$};
 			\node at (2,2) {$\bullet$};
 			\node at (3,-1) {$\bullet$};
 			\node[above] at (7,-2) {$b_2$};
 			\node[below] at (3,-1) {$b_0$};
 			\node[right] at (2,2) {$b_1$};
 			\node[below] at (5,-3) {$\mathcal{G}_2$};

 		\end{scope}
 		
 		\begin{scope}[xshift=480]
 			
 			\draw (2,2)--(3,-1);
 			\draw (3,-1)--(7,0)--cycle;
 			\draw (0,0)--(2,2)--(3,-1)--cycle;
 			\node at (7,0) {$\bullet$};
 			\node at (2,2) {$\bullet$};
 			\node at (0,0) {$\bullet$};
 			\node at (3,-1) {$\bullet$};
 			\node[above] at (7,-2) {$b_2$};
 			\node[below] at (3,-1) {$b_0$};
 			\node[right] at (2,2) {$b_1$};
 			\node[left] at (0,0) {$b_3$};
 				\node[below] at (4.5,-3) {$\mathcal{G}_3$};
 		
 		\end{scope}
 		
 		\begin{scope}[xshift=780]
 		
 		\draw (6,3)--(2,2)--(3,-1)--(7,0)--cycle;
 		\draw (0,0)--(2,2)--(3,-1)--cycle;
 		\node at (7,0) {$\bullet$};
 		\node at (2,2) {$\bullet$};
 		\node at (0,0) {$\bullet$};
 		\node at (6,3) {$\bullet$};
 		\node at (3,-1) {$\bullet$};
 		\node[above] at (7,-2) {$b_2$};
 		\node[below] at (3,-1) {$b_0$};
 		\node[above] at (2,2) {$b_1$};
 		\node[left] at (0,0) {$b_3$};
 		\node[above] at (6,3) {$b_4$};
 		\node[below] at (5,-3) {$\mathcal{G}_4$};	

 		\end{scope}
 		
 	\end{tikzpicture}
 	\caption{A filtration of a simplicial graph complex.}
 	\label{Fig_retraction sequence}
 \end{figure}
 
\end{example}
 
We remark that there are simplicial graph complexes that do not possess any filtration. For example, let $E_1, E_2, E_3$ be the edges of a triangle. Then $\mathcal{G}= \{ (V(E_i), E_i)\}_{i=1}^3$ is a simplicial graph complex. But there does not exist a filtration of $\mathcal{G}$. The problem arises to construct $\mathcal{G}_3$ from $\mathcal{G}_2$.

\subsection{Stratification of a simplicial GKM orbifold complex}
 In this subsection, we define simplicial GKM orbifold complexes and their $T$-invariant stratification. Then we study some topological properties of these spaces and invariant stratification. 
 
 Let $T$ be a compact real torus acting effectively on a $2n$-dimensional orbifold $X$. The set $X_1:= \{x \in X ~|~ \dim(T{\cdot}x) \leq 1\}$ is called the one skeleton of $X$. Let $p \in X^T$ be an isolated fixed point. Then there is an orbifold chart $(\widetilde{U}, \xi, K)$ over a neighborhood $U \subset X$ of $p$ and a finite covering $\widetilde{T}$ of $T$ such that $\widetilde{T}$ acts on $\widetilde{U}$ effectively, the map $\xi \colon \widetilde{U} \to U$ preserves the respective group actions,  $\widetilde{p}$ is the $\widetilde{T}$ fixed point in $\widetilde{U}$ with  $\widetilde{p} = \xi^{-1}(p)$ and $\widetilde{T}/K \cong T$, see \cite[Proposition 2.8]{GGKRW}. Since $T$ is abelian, the tangent space of $\widetilde{U}$ at $\widetilde{p}$ can be decomposed as $$T_{\widetilde{p}}\widetilde{U} \cong \mathcal{V}(\widetilde{\alpha}_1) \oplus \cdots \oplus \mathcal{V}(\widetilde{\alpha}_n)$$
where $\mathcal{V}(\widetilde{\alpha}_i)$ is a one dimensional irreducible representation of $\widetilde{T}$ and $\widetilde{\alpha}_i$ is the corresponding character for $i=1,2,\dots, n$. Let $\alpha_i$ be the image of $\widetilde{\alpha}_i$ under the Lie algebra map ${L(\widetilde{T}) \to L(T)}$ induced by the covering homomorphism $\widetilde{T} \to T$. Then $$T_pU \cong T_{\widetilde{p}}\widetilde{U}/K \cong (\mathcal{V}(\widetilde{\alpha}_1)/K_1) \oplus \cdots \oplus (\mathcal{V}(\widetilde{\alpha}_n)/K_n)$$ for some subgroups $K_1, \ldots, K_n$ of $K$. We say that $\alpha_i$ is the character of the irreducible $T$-representation $\mathcal{V}(\alpha_i) \cong \mathcal{V}(\widetilde{\alpha}_i)/K_i  \subset T_pX$ at $p$ for $i=1, \ldots, n$. We recall the definition of a GKM orbifold following \cite{GZ01, DKS}.

\begin{definition}\label{def_gkm_orbifold}
	A $2n$-dimensional compact $T$-orbifold $X$ is said to be a GKM orbifold if the following holds.
\begin{enumerate}
\item $X^T$ is finite and there are finitely many connected components of $X_1$. 
\item For $p \in X^T$, let $T_pX \cong  \mathcal{V}({\alpha}_1) \oplus \cdots \oplus \mathcal{V}(\alpha_n)$ for some orbifold line bundles $\mathcal{V}({\alpha}_1), \ldots, \mathcal{V}(\alpha_n)$. Then the vectors $\alpha_1, \ldots, \alpha_n$ are pairwise linearly independent. 
\end{enumerate}
\end{definition} 

\begin{example}
Let $X$ be a spindle $\WW P(q_0,q_1)$ defined by $$\WW P(q_0,q_1):=\CC^2\setminus \{0\}/\sim,$$
for some natural numbers $q_0, q_1$, where $(z_0:z_1)\sim (t^{q_0}z_0:t^{q_1}z_1)$ for some $t\in \CC^{*}$.  There exists a $S^1$-action on $\WW P(q_0,q_1)$ defined by $t\cdot[z_0:z_1]_{\sim}=[z_0:tz_1]_{\sim},$ for $t\in S^1$. Then $X$ is a 2-dimensional GKM orbifold sometimes called by `orbifold $S^2$'. Note that $X$ has two fixed points $e_0=[1:0]_\sim$ and $e_1=[0:1]_\sim$.
\end{example}

\begin{remark}\label{rem:spindle} 
If $X$ is a manifold that satisfies the assumption of Definition \ref{def_gkm_orbifold} then $X$ is called a GKM manifold. For a GKM orbifold $X$, the set $X_1$ is a  finite union of spindles.
 Let $V= X^T$ and $E$ be the set of spindles. Then $(V, E)$ has the structure of a regular graph of degree $n$, see \cite{GZ01}. We say that $(V, E)$ is the graph of the GKM orbifold $X$. 
\end{remark}

Let $R$ be a Riemannian metric on a GKM manifold $M$.  Let $B(0_p,r)$ denote the open ball in the tangent space $T_pM$ defined by $$B(0_p,r)=\{v\in T_pM ~|~ ||v||=R_p(0_p,v)< r\}$$ where $0_p$ is the zero vector in $T_pM$. Then for each $v \in B(0_p,r)$ there is a unique geodesic $\sigma_v(t)$ with $\sigma_v(0) = p$ and $\sigma'_v(0) = v$, where $\sigma_v(t)$ is defined for $|t| \leq 1$.
 The exponential map at $p$ is the map
$$\exp_p \colon B(0_p,r) \to M \mbox{  defined by  } \exp_p(v) = \sigma_v(1).$$ \cite[Proposition 4.4.4]{Muk} showed that the exponential map $\exp_p$ is a diffeomorphism on an open neighborhood of $0_p$ in $T_pM$ to an open neighborhood of $p$ in $M$, and it is defined explicitly by $\sigma_v(t)=\exp_p(tv)$.

Let $p$ be a fixed point of the $T$-action on $M$. If $g\in T$ then $g \colon M \to M$ is a diffeomorphism which induces an isomorphism $dg_p \colon T_pM \to T_pM$ such that the following diagram commutes.
\begin{equation}\label{eq_com}
     \begin{tikzcd}
	T_pM\arrow{r}{dg_p} \arrow{d}{\exp_p} & T_pM \arrow{d}{\exp_p} \\%
	M \arrow{r}{g} & M.
\end{tikzcd}
\end{equation}

 Let $\Gamma=(V,E)$ be the graph of a GKM manifold $M$ of dimension $2n$. Then $\Gamma$ is a regular graph of degree $n$ and $V$ corresponds bijectively to the set $M^T$ and $E$ correspond bijectively to the set of all invariant spheres which contains two fixed points. Thus, by Proposition \ref{prop:filt_reg_gh}, we can define a filtration of the graph $\Gamma$ of a GKM manifold $M$ and consequently an ordering on the set $M^T$ of all fixed points of $M$. Consider the orbit map $h \colon M_1 \to \Gamma$ where $M_1$ is the one skeleton of the $T$-action on $M$. The map $h$ sends the set $M^T$ bijectively to $V$ and the $T$-invariant spheres of $M$ bijectively to $E$ such that if $S^2_i$ is an invariant sphere with fixed points $N_i$ and $S_i$ then $h(N_i)$ and $h(S_i)$ are the vertices of the edge $h(S^2_i)$. 
 

 Let $p$ be a fixed point of the $T$-action on $M$ and
 $$T_pM=\mathcal{V}(\alpha_1)\oplus \mathcal{V}(\alpha_2)\oplus\dots \oplus \mathcal{V}(\alpha_n) $$ the decomposition of $T_pM$ into one dimensional irreducible representations.
 Then using \eqref{eq_com}, $\exp_p(\mathcal{V}(\alpha_j))$ is a submanifold of an invariant sphere $S_{j}^2$ in $M$ containing $p$ for $j=1,2,\dots,n$.
 Recall the filtration  of $\Gamma=(V, E)$ as in \eqref{fil_graph} gives an ordering say $\{b_0, \ldots, b_m\}$ on $V$. Then $h(p)=b_i\in V$ for a unique $i \in  \{0, \ldots,  m\}$. 
Let $F_i=E_i \setminus E_{i-1}$ and
$\{e_{i_j}\}_{j=1}^{d_i}$
be the edges in $F_i$ containing $b_i$. Let $h^{-1}(e_{i_j})=S^2_{i_j} \setminus \{b_j\}$, for $j=1,\dots,d_i$.
Let $\mathcal{V}(\alpha_{i_j})$ be the representation corresponding to the $T$-invariant $S^2_{i_j} \setminus \{b_j\}$, for $\alpha_{i_j}\in \{\alpha_1,\dots,\alpha_n\}$ and $j\in \{1,\dots,d_i\}$.
Then $\oplus_{j=1}^{d_i}\mathcal{V}(\alpha_{i_j})$ is an invariant susbspace of $T_pM$. Let $M_i$ be $T$-invariant submanifold of $M$ of dimension $2d_i$ such that $M_i$ is equivariantly diffeomorphic to a $T$-invariant open neighborhood of the origin in $\oplus_{j=1}^{d_i}\mathcal{V}(\alpha_{i_j})$ under the exponential map.


\begin{lemma}
 The submanifold $M_i$ is equivariantly contractible to $h^{-1}(b_i)=p$.
\end{lemma}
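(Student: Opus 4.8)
The plan is to reduce the statement to the elementary fact that a star-shaped invariant neighborhood of the origin in a linear $T$-representation contracts equivariantly onto the origin, and then to transport this contraction to $M_i$ through the exponential map.

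First I would pin down the model. By construction $M_i = \exp_p(\mathcal{O})$, where $\mathcal{O}$ is a $T$-invariant open neighborhood of the origin in the invariant subspace $W := \bigoplus_{j=1}^{d_i}\mathcal{V}(\alpha_{i_j}) \subseteq T_pM$, and $\exp_p$ restricts to a diffeomorphism $\mathcal{O} \xrightarrow{\cong} M_i$. The key structural input is that this diffeomorphism is $T$-equivariant: for each $g \in T$ the commutative diagram \eqref{eq_com} gives $\exp_p \circ dg_p = g \circ \exp_p$, and $dg_p$ is precisely the linear $T$-action on $T_pM$, which restricts to $W$ since $W$ is invariant. As $dg_p$ is a linear isometry of $T_pM$, I may take $\mathcal{O}$ to be a metric ball $B(0_p,r)\cap W$, which is simultaneously $T$-invariant and star-shaped about the origin.

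Next I would write down the contraction on the model. Define $\widetilde{H}\colon \mathcal{O}\times[0,1]\to\mathcal{O}$ by $\widetilde{H}(v,t)=tv$; this is well defined because $\mathcal{O}$ is star-shaped, it fixes the origin for all $t$, and it interpolates between $\mathrm{id}_{\mathcal{O}}$ at $t=1$ and the constant map $0$ at $t=0$. Crucially it is $T$-equivariant, because the $T$-action on $W$ is linear, so $g\cdot(tv)=t(g\cdot v)$. Pushing $\widetilde{H}$ forward through the equivariant diffeomorphism $\exp_p$, namely $H(x,t):=\exp_p\big(t\,\exp_p^{-1}(x)\big)$, yields a $T$-equivariant strong deformation retraction of $M_i$ onto $\exp_p(0)=p=h^{-1}(b_i)$, which gives the claim.

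I expect the only genuine point requiring care to be the equivariance, that is, checking that the scalar contraction on $W$ really commutes with the $T$-action and that $\exp_p$ carries it faithfully to $M_i$; both reduce to the linearity of $dg_p$ together with \eqref{eq_com}. Everything else is the standard straight-line contraction of a star-shaped set and poses no difficulty.
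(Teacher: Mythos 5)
Your proof is correct and follows essentially the same route as the paper's: the paper's one-sentence argument likewise rests on the equivariant contractibility of $\oplus_{j=1}^{d_i}\mathcal{V}(\alpha_{i_j})$ to the origin (via linearity of the $T$-action) transported to $M_i$ through the equivariant exponential map. You have merely written out explicitly the straight-line homotopy and the equivariance check that the paper leaves implicit.
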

\begin{proof}
This follows from the fact that the complex vector space structure of  $\oplus_{j=1}^{d_i}\mathcal{V}(\alpha_{i_j})$ is equivariantly contractible to the origin $0_p$ and $M_i$ is the equivariant image of an invariant open neighborhood of $0_p$.  
\end{proof}

We call this $(M_i,p)$ a geodesic cell. We consider the subset $M_i \subseteq M$ which is maximal with this property. Let $Y_0 := M_0 = h^{-1}(b_0) \cong \{pt\}$ and $$Y_j:=\bigcup_{i=0}^{j} M_i\subset M$$ such that $\overline{M_j} \setminus M_j \subseteq Y_{j-1}$ for $j=1, \ldots, m=|V|$. The above observation leads to the following.

\begin{definition}\label{def_bul_gkm_mfd}
	A GKM manifold $M$ equipped with the $T$-action is called buildable if, for the above $Y_j$'s and $M_j$'s, the following 
	\begin{equation}\label{eq_g-stratification}
\{pt\}=Y_0 \subset	Y_1 \subset Y_2 \subset \cdots \subset Y_{m}=M
	\end{equation}
    is a $T$-invariant stratification for some $m\geq 1$.
\end{definition}
\begin{example}
From the proof of \cite[Theorem 3.1]{DJ} and \cite[Section 3]{BS_Gra}, one can show that toric manifolds and Grassmann manifolds are buildable GKM manifolds.     
\end{example}

\begin{proposition}\cite[Theorem 4.3]{Gon}
Let $M$ be a normal projective variety with a torus action such that $M$ is a GKM manifold. Then $M$ is buildable. 
\end{proposition}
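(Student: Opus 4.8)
The plan is to produce, for the GKM graph $\Gamma=(V,E)$ of $M$, a filtration coming from a generic one-parameter subgroup of $T$, to identify the resulting geodesic cells with Bialynicki--Birula strata, and then to deduce the frontier condition in \eqref{eq_g-stratification} from the filtrability statement \cite[Theorem 4.3]{Gon}. Concretely, I would first equip $M$ with a $T$-invariant K\"ahler metric and a moment map $\mu$, and choose an algebraic one-parameter subgroup $\lambda\colon \CC^{*}\to T$ that is generic, meaning $\langle \alpha_e,\lambda\rangle\neq 0$ for every edge weight $\alpha_e$ of $\Gamma$; such a $\lambda$ exists because $M^T$ is finite and there are only finitely many nonzero edge weights, by the pairwise independence in Definition \ref{def_gkm_orbifold}. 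Set $f=\langle\mu,\lambda\rangle$. Then $f$ is a $T$-invariant Morse function with critical set $M^T$, and its index at a fixed point $p$ is twice the number of edges of $\Gamma$ at $p$ along which $f$ decreases, i.e. twice the number of negative weights of $\lambda$ on $T_pM$. Ordering $M^T$ by increasing value of $f$ yields a linear order $b_0<b_1<\cdots<b_m$ on $V$. Since $M$ is connected the extremal level sets of $f$ are connected, so $b_0$ is the unique index-$0$ critical point; hence every $b_i$ with $i\geq 1$ has a descending edge to a lower vertex, and I would check that this order is realized by a filtration as in Proposition \ref{prop:filt_reg_gh}. Under this order the new edges $F_i=E_i\setminus E_{i-1}$ at $b_i$ are exactly the edges joining $b_i$ to lower vertices, which are precisely the descending edges of $f$ at $b_i$; thus the geodesic cell $M_i$ has complex dimension $d_i$ equal to the number of negative weights of $\lambda$ on $T_{b_i}M$.

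Next I would identify $M_i$ with the Bialynicki--Birula stratum $W_{b_i}=\{x\in M \mid \lim_{t\to\infty}\lambda(t)\cdot x = b_i\}$, the descending cell through $b_i$. Near $b_i$ the exponential map carries an invariant neighborhood of the origin in the negative-weight subrepresentation $\bigoplus_{j=1}^{d_i}\mathcal{V}(\alpha_{i_j})$ diffeomorphically and $T$-equivariantly onto a neighborhood of $b_i$ in $W_{b_i}$; by the maximality in the definition of the geodesic cell, $M_i$ coincides with the whole stratum $W_{b_i}$, which for the smooth projective variety $M$ is $T$-equivariantly isomorphic to the affine space $\bigoplus_{j=1}^{d_i}\mathcal{V}(\alpha_{i_j})$. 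Consequently $M=\bigsqcup_{i=0}^{m}M_i$, and $Y_j=\bigcup_{i\leq j}M_i$ is exactly the union of the strata whose fixed point has $f$-value at most $f(b_j)$.

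It then remains to show that each $Y_j$ is closed, equivalently that $\overline{M_j}\setminus M_j\subseteq Y_{j-1}$. This is precisely the filtrability of the Bialynicki--Birula decomposition: by \cite[Theorem 4.3]{Gon}, for a normal projective $T$-variety the strata admit an ordering in which every union of an initial segment is Zariski closed, and the ordering of the descending cells by increasing $f$-value is such an ordering. Since all the $M_i$ and $Y_j$ are $T$-invariant by construction, the chain \eqref{eq_g-stratification} is a $T$-invariant stratification, and therefore $M$ is buildable.

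The step I expect to be the main obstacle is the identification in the second paragraph: reconciling the differential-geometric geodesic cell, defined through the exponential map of the chosen metric, with the algebraic Bialynicki--Birula stratum, and confirming that the combinatorial filtration produced by Proposition \ref{prop:filt_reg_gh} can be taken to coincide with the Morse order of $f$. Once these identifications are secured, the frontier condition is supplied by \cite[Theorem 4.3]{Gon}, provided one verifies that its hypotheses are met and that its ordering of the cells agrees with the one induced by $\lambda$.
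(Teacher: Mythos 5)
Your proposal is essentially the argument the paper intends: the paper states this proposition with no proof beyond the citation of \cite[Theorem 4.3]{Gon}, and your reduction --- a generic one-parameter subgroup of $T$, the identification of the geodesic cells with the Bialynicki--Birula strata ordered by the moment-map value, and the frontier condition $\overline{M_j}\setminus M_j\subseteq Y_{j-1}$ supplied by Gonzales's filtrability theorem --- is the standard way to unwind that citation. The one point to be careful about is that $M$ is assumed only normal, not smooth as a variety, so the assertion that each stratum is $T$-equivariantly an affine space should be drawn from the GKM-manifold hypothesis (equivariant linearization at the fixed points) or from Gonzales's rational-cell framework rather than from the smooth Bialynicki--Birula theorem; with that adjustment, and granting the identifications you already flag as the delicate steps, your sketch is consistent with the paper.
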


\begin{remark}
The set $h^{-1}(E_j \setminus E_{j-1})$ is the one skeleton of $Y_j \setminus Y_{j-1}$ for all ${j\in \{1,2,\dots,m\}}$.
\end{remark}

\begin{lemma}\label{prop_geo_cell}
Let $M$ be a buildable GKM manifold with a stratification as in \eqref{eq_g-stratification}. If $M_i$ is a geodesic cell of dimension $2d_i$, then 
\begin{align*}
    H^j(Y_i,Y_{i-1};\ZZ) = \begin{cases}0  \quad &\text{ if } j\neq 2d_i\\
    \ZZ  \quad &\text{ if } j= 2d_i
    \end{cases}
\end{align*}
\end{lemma}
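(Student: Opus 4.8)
The plan is to reduce the computation to the standard cellular cohomology calculation. The key structural facts are that $Y_{i-1}$ is a closed (hence compact) subspace of $M$ and that $Y_i \setminus Y_{i-1}$ is precisely the geodesic cell $M_i$, which is equivariantly diffeomorphic to an open neighborhood of the origin in $\oplus_{j=1}^{d_i}\mathcal{V}(\alpha_{i_j}) \cong \CC^{d_i}$ and therefore diffeomorphic to $\RR^{2d_i}$. Granting this, $H^*(Y_i, Y_{i-1};\ZZ)$ should agree with the reduced cohomology of a $2d_i$-sphere, which gives the stated answer.

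First I would record that $Y_{i-1}$ is closed in $M$. By the defining property $\overline{M_k} \setminus M_k \subseteq Y_{k-1}$ of the stratification, one has $\overline{M_k} \subseteq Y_k$ for every $k$, so $Y_{i-1} = \bigcup_{k=0}^{i-1} M_k = \bigcup_{k=0}^{i-1} \overline{M_k}$ is a finite union of closed sets and hence closed; being a closed subset of the compact manifold $M$, it is compact. Consequently $M_i = Y_i \setminus Y_{i-1}$ is open in $Y_i$, and since $Y_i$ is compact Hausdorff with $Y_{i-1}$ closed, the quotient $Y_i / Y_{i-1}$ is homeomorphic to the one-point compactification of $M_i$. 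Because $M_i \cong \RR^{2d_i}$, this compactification is the sphere $S^{2d_i}$, independently of how the cell sits inside $M$. Provided $(Y_i, Y_{i-1})$ is a good pair, so that $H^*(Y_i, Y_{i-1};\ZZ) \cong \widetilde{H}^*(Y_i/Y_{i-1};\ZZ)$, we obtain $H^*(Y_i, Y_{i-1};\ZZ) \cong \widetilde{H}^*(S^{2d_i};\ZZ)$, which is $\ZZ$ in degree $2d_i$ and zero otherwise.

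The main obstacle is justifying the good-pair condition, namely that the inclusion $Y_{i-1} \hookrightarrow Y_i$ is a cofibration (equivalently, that $Y_{i-1}$ is a neighborhood deformation retract in $Y_i$). I expect to extract this from the geodesic structure: the frontier condition $\overline{M_i}\setminus M_i \subseteq Y_{i-1}$ together with the exponential-chart description of $M_i$ furnishes a collar, so that for a closed disk $D \subset M_i$ centered at $p = h^{-1}(b_i)$, the set $Y_{i-1} \cup (M_i \setminus \mathrm{int}\, D)$ deformation retracts onto $Y_{i-1}$ by pushing radially outward toward the frontier. Excising $Y_{i-1}$ together with this outer region then yields $H^*(Y_i, Y_{i-1};\ZZ) \cong H^*(D, \partial D;\ZZ) \cong \widetilde{H}^*(S^{2d_i};\ZZ)$ directly, sidestepping delicate point-set questions about the quotient. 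Checking that this radial retraction is well defined and continuous up to the frontier is the one place where the precise geometry of the geodesic cell, rather than its mere homotopy type, is required.
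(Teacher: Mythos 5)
Your argument is correct and is essentially the paper's own proof: both identify $Y_i$ as $Y_{i-1}$ with a single $2d_i$-cell attached and reduce the relative cohomology to $\widetilde{H}^{*}(S^{2d_i})$, the paper via the cofibration $S^{2d_i-1}\to Y_{i-1}\to Y_i$ and the pair $(C(S^{2d_i-1}),S^{2d_i-1})$, you via the one-point compactification of the open cell. The only difference is that you explicitly flag and sketch the good-pair/characteristic-map verification that the paper silently assumes when it invokes ``the attaching map''; that extra care is welcome but does not change the route.
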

\begin{proof}
The submanifold $M_i$ is homeomorphic to an open ball in $\RR^{2d_i}$. Thus $\partial M_i$ is homeomorphic to $S^{2d_i-1}$. The attaching map induces the cofibration $S^{2d_i-1}\to Y_{i-1}\to Y_i= Y_{i-1} \cup M_i$. Using this cofibration  $$H^j(Y_i,Y_{i-1})\cong H^j(C(S^{2d_i-1}),S^{2d_i-1})\cong H^{j-1}(S^{2d_i-1}).$$ So we get the result.
\end{proof}

\begin{theorem}\label{thm_eq_formal}
The integral cohomology of $Y_i$ in \eqref{eq_g-stratification} has no torsions and vanishes in odd degrees for $i=0,1,\dots,m$. In particular, $M$ is integrally equivariantly formal.
\end{theorem}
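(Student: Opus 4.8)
The plan is to establish the cohomological statement by induction on $i$, feeding Lemma \ref{prop_geo_cell} into the long exact sequence in cohomology of the pairs $(Y_i, Y_{i-1})$ arising from the stratification \eqref{eq_g-stratification}. The base case $Y_0 = \{pt\}$ is immediate: $H^*(Y_0;\ZZ)$ is $\ZZ$ concentrated in degree $0$, hence torsion-free and vanishing in odd degrees. For the inductive step I assume that $H^*(Y_{i-1};\ZZ)$ is torsion-free and vanishes in odd degrees, and I aim to transfer both properties to $Y_i$.

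For the inductive step I would write out the long exact sequence of the pair,
\begin{equation*}
\cdots \to H^{j}(Y_i, Y_{i-1}; \ZZ) \to H^j(Y_i; \ZZ) \to H^j(Y_{i-1}; \ZZ) \xrightarrow{\delta} H^{j+1}(Y_i, Y_{i-1}; \ZZ) \to \cdots,
\end{equation*}
and substitute the relative groups supplied by Lemma \ref{prop_geo_cell}, namely $\ZZ$ in degree $2d_i$ and $0$ otherwise. For every $j \notin \{2d_i - 1, 2d_i\}$ this forces the restriction map $H^j(Y_i;\ZZ) \to H^j(Y_{i-1};\ZZ)$ to be an isomorphism, so these groups inherit torsion-freeness and even concentration from $Y_{i-1}$. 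It remains to treat degrees $2d_i-1$ and $2d_i$. Here the decisive observation is that $2d_i - 1$ is odd, so the induction hypothesis gives $H^{2d_i-1}(Y_{i-1};\ZZ) = 0$; exactness then forces $H^{2d_i-1}(Y_i;\ZZ) = 0$ and kills the connecting map into $H^{2d_i}(Y_i,Y_{i-1};\ZZ)\cong\ZZ$. Consequently the sequence collapses to the short exact sequence
\begin{equation*}
0 \to \ZZ \to H^{2d_i}(Y_i; \ZZ) \to H^{2d_i}(Y_{i-1}; \ZZ) \to 0.
\end{equation*}

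To finish I would note that $M$ is compact, so all groups in sight are finitely generated; by the induction hypothesis $H^{2d_i}(Y_{i-1};\ZZ)$ is finitely generated and torsion-free, hence free, and therefore the displayed short exact sequence splits. This gives $H^{2d_i}(Y_i;\ZZ) \cong \ZZ \oplus H^{2d_i}(Y_{i-1};\ZZ)$, which is free, completing the induction: $H^*(Y_i;\ZZ)$ is torsion-free and concentrated in even degrees for every $i$. Taking $i = m$ yields $H^{\mathrm{odd}}(M;\ZZ) = 0$ with $H^*(M;\ZZ)$ torsion-free, and the criterion recalled in the introduction then gives integral equivariant formality of $M$.

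This is essentially the standard ``even cells only'' computation, so I do not anticipate a serious obstacle. The one point requiring care is the vanishing of the connecting homomorphisms, which rests entirely on the parity statement that every geodesic cell $M_i$ has even real dimension $2d_i$, so that the relative cohomology lives in an even degree and the odd-degree groups of $Y_{i-1}$ have no room to interact with it. I would also make explicit the finite generation coming from compactness of $M$, since this is what allows one to pass from torsion-free to free and thereby split the short exact sequence.
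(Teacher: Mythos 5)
Your proof is correct and follows essentially the same route as the paper's: induction on $i$, the long exact sequence of the pair $(Y_i,Y_{i-1})$ combined with Lemma \ref{prop_geo_cell}, vanishing in degree $2d_i-1$ via the induction hypothesis, and the resulting split short exact sequence in degree $2d_i$. Your version is slightly more careful than the paper's in justifying the splitting via finite generation and freeness of $H^{2d_i}(Y_{i-1};\ZZ)$, but the argument is the same.
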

\begin{proof}
We prove this by induction on $i$. Consider the cohomology exact sequence of the pair $(Y_i,Y_{i-1})$,
$$\to H^{j-1}(Y_{i-1})\to H^{j}(Y_i,Y_{i-1})\to H^{j}(Y_{i})\to H^{j}(Y_{i-1})\to H^{j+1}(Y_i,Y_{i-1})\to.$$
Using Lemma \ref{prop_geo_cell} $H^{j}(Y_{i})\cong H^{j}(Y_{i-1})$, for $j\neq 2d_i,2d_i-1$. $H^{2d_i-1}(Y_{i-1})=0$ by induction. Thus $H^{2d_i-1}(Y_{i})=0$. Also $H^{2d_i}(Y_i)=H^{2d_i}(Y_{i-1})\oplus \ZZ.$ Therefore, by induction, one can complete the proof. 
\end{proof}

We note that there exists a GKM manifold that is not equivariantly formal; see \cite[Theorem 3.1.1]{GZ01} and the remark after that. Therefore, this GKM manifold is not buildable, though the associated one skeleton has the structure of a regular graph. This justifies the introduction of Definition \ref{def_bul_gkm_mfd}.

Now, we introduce the concept of simplicial GKM orbifold complexes generalizing the definition of simplicial complexes.
\begin{definition}\label{def_simp_orbifold}
Let $\mathcal{K}$ be a finite collection of effective orbifolds. We say that $\mathcal{K}$ is a simplicial orbifold complex if $X, Y \in \mathcal{K}$  and $X \cap Y \neq \emptyset $ then $X \cap Y \in \mathcal{K}$ and $X \cap Y$  is a suborbifold of both $X$ and $Y$.
\end{definition} 
We note that one can think $\mathcal{K}$ as a category where objects are elements in $\mathcal{K}$ and for any two elements $X, Y \in \mathcal{K}$, $\mbox{Mor}(X, Y)$ is either inclusion or empty. The limit of this category, denoted by $|\mathcal{K}|$, is called its geometric realization.
We may not distinguish these two notations if both $\mathcal{K}$ and $|\mathcal{K}|$ are clear from the context.
As usual,  the dimension of a simplicial orbifold complex is defined to be the maximum dimension of the orbifolds in that simplicial orbifold complex. Note that each simplex has an orbifold structure. If all elements in  $\mathcal{K}$ are simplexes, then  $\mathcal{K}$ is called a simplicial complex. Next, we introduce another definition that generalizes GKM-orbifolds.

 \begin{definition}\label{def_gkm_simp_orbifold}
 Let $\mathcal{K}$ be a simplicial orbifold complex and there exists an effective $T$-action on $|\mathcal{K}|$ such that if $X\in \mathcal{K}$ then $X$ is a GKM orbifold with respect to $T/T_X$-action, where $T_X$ denotes the isotropy group of $X$. Then $\mathcal{K}$ is called simplicial GKM orbifold complex.
\end{definition}
\begin{example}
All GKM orbifolds and GKM manifolds are simplicial GKM orbifold complexes.  
\end{example}

\begin{example}
Let $X$ be a $2n$-dimensional quasitoric orbifold as defined in \cite{PS} and $\pi \colon X \to Q$ the corresponding orbit map. From \cite{PS} and Definition \ref{def_gkm_orbifold}, we get that any quasitoric orbifold has a GKM orbifold structure. If $F$ is a $k$-dimensional face then  $\pi^{-1}(F)$ is a $2k$-dimensional quasitoric orbifold, see \cite[Subsection 2.3]{PS}. Let $Q' \subset Q$ be such that $Q'$ is the union of some faces in $Q$. Then $\pi^{-1}(Q')\subset X$ is an example of a simplicial GKM orbifold complex. This gives many examples of simplicial GKM orbifold complexes which are not GKM orbifolds.  
\end{example}

\begin{example}
	Let $\mathcal{P}_1$ and $\mathcal{P}_2$ be weighted Grassmann orbifolds, see \cite{AbMa1, BS_Gra} and $\mathcal{P}_0$ be a proper suborbifold of both $\mathcal{P}_1$ and $\mathcal{P}_2$. Then the pushout $X$ in the following diagram is a simplicial GKM orbifold complex, which is not a GKM orbifold. 
	\[ \begin{tikzcd}
		\mathcal{P}_0 \arrow{r} \arrow{d} & \mathcal{P}_1 \arrow{d} \\%
		\mathcal{P}_2 \arrow{r} & X.
	\end{tikzcd}
	\]
\end{example}


Let $\mathcal{K}$ be a simplicial GKM orbifold complex and $|\mathcal{K}|_1 := \{x \in |K| ~|~ \dim{T{\cdot}x} \leq 1\}$ where $T$-action on $|\mathcal{K}|$ satisfies Definition \ref{def_gkm_simp_orbifold}. Then by the similar argument as in Remark \ref{rem:spindle}, the set $|\mathcal{K}|_1$ has a structure of a simplicial graph complex $\mathcal{G}=(V, E)$. Suppose there is a filtration 
$$(V_0, E_0) \subset (V_1, E_1) \subset \cdots \subset (V_m, E_m) =(V, E)$$ 
of the simplicial graph complex $\mathcal{G}=(V, E)$ as in Definition \ref{def_smp_gph_cplx}. This filtration induces an ordering $\{b_0, \ldots, b_m\}$ on the vertices $V$ of the simplicial graph complex $\mathcal{G}$.

Let $p$ be a fixed point of the $T$-action on $\mathcal{K}$ and $h \colon |\mathcal{K}|_1 \to \mathcal{G}$ the orbit map such that $h(p)=b_i$, where $b_i\in V$ (the set of vertices of $\mathcal{G}$) with $i\geq 1$. Let $X(i)$ be the GKM orbifold which corresponds to the regular graph $(V_{\beta_{i-1}},E_{\beta_{i-1}})$, which is defined in \eqref{ret_sim_cplx}. A finite covering $\widetilde{T}$ of $T$ acts on the orbifold chart $(\widetilde{U},\xi,H)$ over a neighbourhood $U \subseteq X(i)$ of $p$ such that $\xi\colon \widetilde{U}\to U$ preserves the respective group action and $\widetilde{p}= \xi^{-1}(p)$ be the $\widetilde{T}$ fixed point in $\widetilde{U}$. Let
$$T_{\widetilde{p}}\widetilde{U} \cong \mathcal{V}(\widetilde{\alpha}_1) \oplus \cdots \oplus \mathcal{V}(\widetilde{\alpha}_n).$$
 
 
 Let $\{e_{i_j}\}_{j=1}^{d_i}$ be the edges in $F_i=E_i \setminus E_{i-1}$ and $\mathcal{V}(\widetilde{\alpha}_{i_j})$ the representation corresponding to the $T$-action on $h^{-1}(e_{i_j})\cong S^2_{i_j}\setminus \{b_j\}$. Therefore, there exists a $\widetilde{T}$-invariant submanifold $\widetilde{M}_i$ of $\widetilde{U}$ containing $\widetilde{p}$ and $T_{\widetilde{p}}\widetilde{M}_i=\oplus_{j=1}^{d_i}\mathcal{V}(\widetilde{\alpha}_{i_j})$. We denote $\xi(\widetilde{M}_i)$ by $M_i$. Define ${H_i :=\{h  \in H ~|~ h \widetilde{M}_i = \widetilde{M}_i\}}$ and 
\begin{equation}\label{eq_local_grp}
L_i := H/H_i.
\end{equation} 
Then $M_i$ is a $T$-invariant quotient orbifold whose orbifold chart is given by $(\widetilde{M}_i, \xi, L_i)$. So there exists a $T$-invariant suborbifold $M_i$ containing $h^{-1}(b_i)$ and its tangential representation is determined by the characters $\{\alpha_{i_j}\}_{j=1}^{d_i}$. Then the subset $M_i$ is $T$-equivariantly homeomorphic to $\CC^{d_i}/L_i$. Define $M_0 = h^{-1}(b_0) \cong \{pt\}$ and $$\mathcal{K}_j :=\cup_{i=0}^{j} M_i\subset \mathcal{K}$$ for $j=0, 1, \ldots, m=|V|$. The above observation inspires to define the following.

\begin{definition}\label{def_filt_simp_gkm}
 A simplicial GKM orbifold complex $\mathcal{K}$ 
 is called {\it buildable} if there is a filtration $\{pt\}=\mathcal{G}_0\subset \mathcal{G}_1\subset \cdots \subset \mathcal{G}_m=\mathcal{G}$  of its simplicial graph complex $\mathcal{G}$ and a $T$-invariant stratification 
\begin{equation}\label{eq_G-tratification}
\{pt\}=\mathcal{K}_0 \subset \mathcal{K}_1 \subset \mathcal{K}_2 \subset \cdots \subset \mathcal{K}_{m} =\mathcal{K}
\end{equation}
such that $\mathcal{K}_j$ is closed, $h^{-1}(E_j\setminus E_{j-1})$ is the one skeleton of $ \mathcal{K}_j\setminus \mathcal{K}_{j-1}$ and  $\mathcal{K}_j\setminus \mathcal{K}_{j-1}$ is $T$-equivariantly homeomorphic to $\CC^{d_j}/L_j$ for some finite group $L_j$ for $j=1,2,\dots,m$.
	
	In addition, if the group $L_j$ defined in \eqref{eq_local_grp} is trivial for any $j$, then  $\mathcal{K}$ is called a {\it divisive} simplicial GKM orbifold complex. 
\end{definition}

\begin{example}
The toric varieties over almost simple lattice polytopes \cite[Section 3]{SaSo} and the weighted Grassmann orbifolds \cite{AbMa1} are buildable simplicial GKM orbifold complexes. The divisive weighted projective spaces \cite{HHRW}, the retractable toric orbifolds \cite[Section 2]{SU}, the divisive toric varieties \cite[Section 5]{SaSo} and the divisive weighted Grassmann orbifolds \cite{BS_Gra} are divisive simplicial GKM orbifold complexes.
\end{example}

\begin{lemma}
Let $\mathfrak{p}$ be a prime and $\gcd(\mathfrak{p},|L_i|)=1$ for $1\leq i\leq j$.  Then the integral cohomology of $\mathcal{K}_j$ in \eqref{eq_G-tratification} has no $\mathfrak{p}$-torsion.
\end{lemma}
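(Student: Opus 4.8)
The plan is to mimic the argument of Theorem \ref{thm_eq_formal}, but working with coefficients in the localization $\ZZ_{(\mathfrak{p})}$ so that the finite isotropy groups $L_i$ become invisible to cohomology. Since $\gcd(\mathfrak{p},|L_i|)=1$ for all $i\leq j$, each $|L_i|$ is a unit in $\ZZ_{(\mathfrak{p})}$. As $\ZZ_{(\mathfrak{p})}$ is flat over $\ZZ$, we have $H^n(\mathcal{K}_j;\ZZ_{(\mathfrak{p})})\cong H^n(\mathcal{K}_j;\ZZ)\otimes_{\ZZ}\ZZ_{(\mathfrak{p})}$, and $H^n(\mathcal{K}_j;\ZZ)$ has no $\mathfrak{p}$-torsion if and only if this localized group is torsion-free over the PID $\ZZ_{(\mathfrak{p})}$. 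Thus it suffices to prove, by induction on $j$, the stronger statement that $H^*(\mathcal{K}_j;\ZZ_{(\mathfrak{p})})$ is a free $\ZZ_{(\mathfrak{p})}$-module concentrated in even degrees; the base case $\mathcal{K}_0=\{pt\}$ is immediate.

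The crux is the orbifold analogue of Lemma \ref{prop_geo_cell}, namely the computation of the relative groups $H^*(\mathcal{K}_j,\mathcal{K}_{j-1};\ZZ_{(\mathfrak{p})})$. By Definition \ref{def_filt_simp_gkm}, the open cell $M_j=\mathcal{K}_j\setminus\mathcal{K}_{j-1}$ is $T$-equivariantly homeomorphic to $\CC^{d_j}/L_j$, so its closure $\overline{M_j}$ is the cone on $S^{2d_j-1}/L_j$, and the attachment yields a cofibration $S^{2d_j-1}/L_j\to\mathcal{K}_{j-1}\to\mathcal{K}_j$. Exactly as in Lemma \ref{prop_geo_cell} this gives
$$H^n(\mathcal{K}_j,\mathcal{K}_{j-1};\ZZ_{(\mathfrak{p})})\cong\widetilde{H}^{\,n-1}\big(S^{2d_j-1}/L_j;\ZZ_{(\mathfrak{p})}\big).$$
Now I would invoke the transfer isomorphism for the finite-group action: since $L_j$ acts on the sphere $S^{2d_j-1}$ and $|L_j|$ is invertible in $\ZZ_{(\mathfrak{p})}$, the quotient map induces an isomorphism $H^*(S^{2d_j-1}/L_j;\ZZ_{(\mathfrak{p})})\cong H^*(S^{2d_j-1};\ZZ_{(\mathfrak{p})})^{L_j}$ onto the invariants (Bredon's transfer for finite transformation groups). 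Because the $L_j$-action on $\CC^{d_j}$ descends from the complex tangential representation $T_{\widetilde p}\widetilde M_j=\bigoplus_{k}\mathcal{V}(\widetilde\alpha_{i_k})$ in \eqref{eq_local_grp}, it is $\CC$-linear, hence orientation-preserving on $\RR^{2d_j}$ and on $S^{2d_j-1}$; consequently $L_j$ acts trivially on the top class. Therefore $H^*(S^{2d_j-1}/L_j;\ZZ_{(\mathfrak{p})})=\ZZ_{(\mathfrak{p})}$ in degrees $0$ and $2d_j-1$ and vanishes otherwise, so $H^n(\mathcal{K}_j,\mathcal{K}_{j-1};\ZZ_{(\mathfrak{p})})$ is $\ZZ_{(\mathfrak{p})}$ for $n=2d_j$ and $0$ for all other $n$.

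With this in hand the inductive step is formally identical to the proof of Theorem \ref{thm_eq_formal}. Feeding the relative computation into the long exact sequence of the pair $(\mathcal{K}_j,\mathcal{K}_{j-1})$ with $\ZZ_{(\mathfrak{p})}$ coefficients shows $H^n(\mathcal{K}_j)\cong H^n(\mathcal{K}_{j-1})$ for $n\neq 2d_j,2d_j-1$; the inductive hypothesis forces $H^{2d_j-1}(\mathcal{K}_{j-1})=0$, hence $H^{2d_j-1}(\mathcal{K}_j)=0$, and the connecting map out of a vanishing odd group together with the vanishing of $H^{2d_j+1}(\mathcal{K}_j,\mathcal{K}_{j-1})$ gives a split short exact sequence with $H^{2d_j}(\mathcal{K}_j)\cong H^{2d_j}(\mathcal{K}_{j-1})\oplus\ZZ_{(\mathfrak{p})}$. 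Thus $H^*(\mathcal{K}_j;\ZZ_{(\mathfrak{p})})$ remains free and even, closing the induction and proving the absence of $\mathfrak{p}$-torsion.

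I expect the main obstacle to be the geometric/homotopical justification that the cell attachment in the orbifold stratification is genuinely a cofibration with cofiber the Thom space $(\CC^{d_j}/L_j)^+\simeq C(S^{2d_j-1}/L_j)/(S^{2d_j-1}/L_j)$ — i.e.\ that collapsing the closed subspace $\mathcal{K}_{j-1}$ recovers the one-point compactification of $\CC^{d_j}/L_j$ — and the correct application of the transfer isomorphism to the possibly non-free linear $L_j$-action; the orientation-triviality on the top sphere class, which makes the answer torsion-free rather than merely $\mathfrak{p}$-torsion-free, is the point where complex-linearity of the isotropy representation is essential. Once these local facts are secured, the global induction is routine.
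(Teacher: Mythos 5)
Your argument is correct, but it takes a genuinely different and more self-contained route than the paper. The paper's entire proof consists of exhibiting the cofibration $S^{2d_i-1}/L_i\to\mathcal{K}_{i-1}\to\mathcal{K}_i$ and then citing \cite[Theorem~1.1]{BSS}, a general statement about spaces built by attaching cells of the form $D^{2k}/L$ (``$q$-CW complexes''): if $\mathfrak{p}$ is coprime to the orders of all the local groups, the integral cohomology has no $\mathfrak{p}$-torsion. You instead reprove this fact in the case at hand: localizing at $\mathfrak{p}$, using the transfer to identify $H^*(S^{2d_j-1}/L_j;\ZZ_{(\mathfrak{p})})$ with the $L_j$-invariants, observing that the complex-linearity of the tangential $L_j$-representation forces the action on the top class of the sphere to be trivial, and then running the long exact sequence induction of Theorem~\ref{thm_eq_formal} over $\ZZ_{(\mathfrak{p})}$. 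The two approaches rest on the same cofibration, which both treat as supplied by Definition~\ref{def_filt_simp_gkm}. What yours buys is independence from the external reference and a slightly stronger conclusion, namely that $H^*(\mathcal{K}_j;\ZZ_{(\mathfrak{p})})$ is free over $\ZZ_{(\mathfrak{p})}$ and concentrated in even degrees, not merely $\mathfrak{p}$-torsion-free; what the paper's citation buys is brevity and a result stated at its natural level of generality. The points you flag as potential obstacles (that collapsing $\mathcal{K}_{j-1}$ yields the one-point compactification of $\CC^{d_j}/L_j$, and the applicability of the transfer to a non-free linear action) are genuine but are exactly the hypotheses the paper also uses implicitly when it writes down the cofibration, so your proof stands on the same footing as the paper's.
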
 

\begin{proof}
    We have the cofibration ${S^{2d_i-1}}/{L_i}\to \mathcal{K}_{i-1}\to \mathcal{K}_i= \mathcal{K}_{i-1} \cup \frac{\CC^{d_i}}{L_i}$, for every $1\leq i\leq j$. Then the result follows using \cite[Theorem 1.1]{BSS}.
\end{proof}

\begin{theorem}
A buildable simplicial GKM orbifold complex is equivariantly formal and a divisive simplicial GKM orbifold complex is integrally equivariantly formal.
\end{theorem}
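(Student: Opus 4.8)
The plan is to treat both assertions in parallel, exploiting the $T$-invariant stratification \eqref{eq_G-tratification} exactly as Theorem \ref{thm_eq_formal} exploited \eqref{eq_g-stratification}. The engine in each case is the cofibration
$$ S^{2d_i-1}/L_i \to \mathcal{K}_{i-1} \to \mathcal{K}_i = \mathcal{K}_{i-1} \cup \CC^{d_i}/L_i $$
recorded in the $\mathfrak{p}$-torsion lemma above, together with the long exact sequence of the pair $(\mathcal{K}_i, \mathcal{K}_{i-1})$. The strategy is: compute the relative cohomology, run the induction of Theorem \ref{thm_eq_formal} to kill odd-degree classes, and then invoke the standard formality criteria.

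First I would compute $H^*(\mathcal{K}_i, \mathcal{K}_{i-1})$. Using the cone description as in Lemma \ref{prop_geo_cell}, one has $H^j(\mathcal{K}_i, \mathcal{K}_{i-1}) \cong \widetilde{H}^{j-1}(S^{2d_i-1}/L_i)$. For the finite group $L_i$ acting linearly on $\CC^{d_i}$, a transfer (averaging) argument gives, with rational coefficients, $\widetilde{H}^*(S^{2d_i-1}/L_i; \QQ) \cong \widetilde{H}^*(S^{2d_i-1}; \QQ)^{L_i}$. Since $L_i$ acts by \emph{complex}-linear maps it preserves the complex orientation, hence acts trivially on the top class $\widetilde{H}^{2d_i-1}(S^{2d_i-1}; \QQ)$, and the invariants are $\QQ$ concentrated in degree $2d_i-1$. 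Therefore $H^j(\mathcal{K}_i, \mathcal{K}_{i-1}; \QQ) \cong \QQ$ for $j = 2d_i$ and $0$ otherwise, the exact rational analogue of Lemma \ref{prop_geo_cell}. In the divisive case $L_i$ is trivial, so $S^{2d_i-1}/L_i = S^{2d_i-1}$ and the same computation holds integrally: $H^j(\mathcal{K}_i, \mathcal{K}_{i-1}; \ZZ)$ is $\ZZ$ for $j = 2d_i$ and $0$ otherwise.

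With these relative groups in hand, I would run the induction of Theorem \ref{thm_eq_formal} verbatim on the long exact sequence of $(\mathcal{K}_i, \mathcal{K}_{i-1})$: passing from $\mathcal{K}_{i-1}$ to $\mathcal{K}_i$ only adds a single generator in the even degree $2d_i$, so no odd-degree cohomology is created and (in the divisive case) no torsion is introduced. Hence $H^{\mathrm{odd}}(\mathcal{K}; \QQ) = 0$ in the buildable case, and $H^{\mathrm{odd}}(\mathcal{K}; \ZZ) = 0$ with $H^*(\mathcal{K}; \ZZ)$ torsion-free in the divisive case. Finally I invoke the formality criteria: $H^{\mathrm{odd}}(\mathcal{K}; \QQ) = 0$ forces the Serre spectral sequence of $\mathcal{K} \hookrightarrow \mathcal{K}_T \to BT$ to collapse (both $\mathcal{K}$ and $BT$ having cohomology in even degrees only), giving rational equivariant formality; and the remark in the introduction—vanishing odd cohomology plus torsion-freeness over $\ZZ$—gives integral equivariant formality.

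The main obstacle is the relative computation for the quotient, namely establishing that rationally $H^*(\CC^{d_i}/L_i, S^{2d_i-1}/L_i; \QQ)$ is a single copy of $\QQ$ in degree $2d_i$. This rests on two points that deserve care: that passage to a finite quotient commutes with rational cohomology through the $L_i$-invariants (the transfer argument, which is where the hypothesis on $L_i$ being finite is used), and that the complex-linearity of the $L_i$-action is orientation preserving so that the invariant subspace in top degree survives. For the integral divisive statement no such care is needed, since $L_i = \{e\}$ reduces the computation to Lemma \ref{prop_geo_cell}, and the $\mathfrak{p}$-torsion lemma (via \cite{BSS}) already packages the torsion control that underlies the rational degeneration.
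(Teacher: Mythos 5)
Your proof is correct and follows essentially the same route as the paper: both arguments induct over the $T$-invariant stratification, using that each stratum $\CC^{d_j}/L_j$ contributes rational cohomology only in even degrees (and integral, torsion-free cohomology when $L_j$ is trivial), and then invoke the even-degree/torsion-free criteria for (integral) equivariant formality. The paper's proof is in fact far terser---it simply asserts the even-degree concentration---so your transfer argument identifying $\widetilde{H}^*(S^{2d_i-1}/L_i;\QQ)$ with the $L_i$-invariants, together with the observation that the complex-linear action preserves orientation, supplies details the paper leaves implicit.
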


\begin{proof}
Let $\mathcal{K}$ be a buildable simplicial GKM orbifold complex. So there exists a filtration as in \eqref{eq_G-tratification}  such that $\mathcal{K}_j\setminus\mathcal{K}_{j-1}=\CC^{d_j}/L_j$ for some $d_j\geq 0$ and $j=1,2,\dots,m$. Therefore, $H^{*}(\mathcal{K};\QQ)$ is concentrated in even degrees.

Moreover, if $\mathcal{K}$ is a divisive then $\mathcal{K}_j\setminus\mathcal{K}_{j-1}=\CC^{d_j}$. Thus, $H^{*}(\mathcal{K};\ZZ)$ is torsion-free and concentrated in even degrees.  
\end{proof}

\begin{remark}
A buildable simplicial GKM manifold complex can be viewed as a divisive simplicial GKM orbifold complex.
Every simplicial GKM orbifold complex may not be buildable. For example,
 $$\mathcal{K}=\{[z_1 : z_2: z_3] \in \CC P^2 ~|~ \mbox{at least one} ~z_i~ \mbox{is zero}\}$$
  is a simplicial GKM orbifold complex. However, this simplicial GKM orbifold complex is not buildable in the sense of Definition \ref{def_filt_simp_gkm}. 

\end{remark}

\subsection{Simplical GKM graph complexes and equivariant cohomology}	
In this subsection, we introduce simplicial GKM graph complexes and their equivariant cohomology. Then we compute the equivariant cohomology ring of certain simplicial GKM orbifold complexes with integer coefficients.

We recall the definition of an axial function and connection on a regular graph $\Gamma$ from \cite[Definition 2.2]{DKS}. Let $\Gamma=(V, E)$ be a regular graph. We denote the edges in $\Gamma$ emanating from $p \in V(\Gamma)$ by $E_p(\Gamma)$. We also denote the starting vertex of $e \in E(\Gamma)$ by $s(e)$, and the terminal vertex by $t(e)$.   
 
 An axial function $\alpha$ on a regular graph $\Gamma$ is a map $\alpha \colon E(\Gamma) \rightarrow H^2(BT;\mathbb{Q})$ such that the vectors in $\{\alpha(e)~|~e \in E_p(\Gamma)\}$ are pairwise linearly independent for any $p\in V(\Gamma)$ as well as if  $e \in E(\Gamma)$ is an oriented edge and $\bar{e}$ its reverse orientation then 
 \begin{equation}\label{eq_axial_map}
 r_e\alpha(e)=\pm r_{\bar{e}}\alpha(\bar{e}) \in H^2(BT;\mathbb{Z}).
 \end{equation}
 for some positive integers $r_e$ and $r_{\bar{e}}$.

The collection of maps $\theta_e  \colon E_{s(e)}(\Gamma) \to E_{t(e)}(\Gamma)$ for $e \in E(\Gamma)$ is called a connection on a regular graph $\Gamma$ if $\theta_{e}(e)=\bar{e}$ and $\theta_{\bar{e}}=\theta_e^{-1}$ for all $e \in E(\Gamma)$. 

\begin{definition}
Let $\alpha$ and $\theta$ be an axial function and a connection on a regular graph $\Gamma$ respectively. Then the triple $(\Gamma,\alpha,\theta)$ is called a GKM graph if the following holds.
    \begin{enumerate}
        \item The vectors in the set $\alpha(E_p(\Gamma))$ are pairwise linearly independent for every ${p\in V(\Gamma)}$.
        
      \item  For any two edges $e ,e'\in E(\Gamma)$ such that $s(e)=s(e')$, there exists $c_{e,e'}\in \ZZ\setminus \{0\}$ satisfying $c_{e ,e'}(\alpha(\theta_e(e'))-\alpha(e')) \equiv 0 \text{ mod }r_e\alpha(e)$.
        
    \end{enumerate}
\end{definition}

\begin{definition}
 Let $\mathcal{G}=\{\Gamma_\beta :=(V_{\beta},E_{\beta})\}_{\beta\in \Lambda}$ be a simplicial graph complex and $(\Gamma_\beta, \alpha_\beta, \theta_\beta)$ GKM graphs for every $\beta\in \Lambda$. If $\alpha_{\beta}=\alpha_{\delta}$ on $E_{\beta}\cap E_{\delta}$ and  $\theta_{\beta}=\theta_{\delta}$ for each edge $e\in E_{\beta}\cap E_{\delta}$, then the collection $\{(\Gamma_{\beta},\alpha_{\beta},\theta_{\beta})\}_{\beta\in \Lambda}$ is called a simplicial GKM graph complex. We denote this collection by $(\mathcal{G}, \alpha, \theta)$ where $\alpha(e)=\alpha_{\beta}(e)$ if $e\in E_\beta$ and $\theta_e: E_p(\Gamma_\beta)\to E_{p'}(\Gamma_\beta)$ whenever $e$ is the edge joining $p$ and $p'$ in $\Gamma_\beta=(V_\beta,E_\beta)$.    
\end{definition}

One can obtain a simplicial GKM graph complex $(\mathcal{G}_\mathcal{K},\alpha,\theta)$ from a simplicial GKM orbifold complex $\mathcal{K}$ using Remark \ref{rem:spindle}. Let $\mathcal{K}$ be the collection of GKM orbifolds $\{X_\beta\}_{\beta \in \Lambda}$. So $\Gamma_\beta$ is obtained by considering the fixed points of the torus action on $X_\beta$ as the vertices and the spindles give the edges if they contain two fixed points. Note that one can find the associated axial function $\alpha$, the connection $\theta$, $c_{e,e'}$ and $r_e$ for each GKM orbifold $X$, see for example  \cite[Section 2]{DKS}. Therefore we get the collective data $\{(\Gamma_\beta, \alpha_\beta, \theta_\beta)\}_{\beta \in \Lambda}$ which gives $(\mathcal{G}_{\mathcal{K}},\alpha,\theta)$.

	

\begin{example}\label{ex:gkm_simp_graph}
Figure \ref{Fig_gkm_graph} is obtained by gluing an edge of the boundary of two triangles. It is not a regular graph. However, observe that it is a simplicial graph complex which is a collection of $3$ regular graphs. Let us denote this graph complex by $\mathcal{G}$. Then $V(\mathcal{G})=\{v_0,v_1,v_2,v_3\}$. Let $T=(S^1)^4$ and $y_1, \ldots, y_4$ be the standard basis of $H^2(BT;\ZZ)$. Define the axial function $\alpha \colon E(\mathcal{G}) \rightarrow H^2(BT;\mathbb{Q})$ by 
		$$ \alpha(v_1v_2)=((y_1+y_4)-\frac{c_2}{c_1}(y_1+y_3)), \quad \alpha(v_2v_1)=((y_1+y_3)-\frac{c_1}{c_2}(y_1+y_4)),$$  $$\alpha(v_2v_0)=((y_1+y_2)-\frac{c_0}{c_2}(y_1+y_4)), \quad
		\alpha(v_0v_2)=((y_1+y_4)-\frac{c_2}{c_0}(y_1+y_2)),$$ $$ \alpha(v_1v_0)=((y_1+y_2)-\frac{c_0}{c_1}(y_1+y_3)), \quad \alpha(v_0v_1)=((y_1+y_3)-\frac{c_1}{c_0}(y_1+y_2)).$$
		$$\alpha(v_3v_0)=((y_1+y_2)-\frac{c_0}{c_3}(y_2+y_3)), \quad
		\alpha(v_0v_3)=((y_2+y_3)-\frac{c_3}{c_0}(y_1+y_2)),$$
		$$ \alpha(v_1v_3)=((y_2+y_3)-\frac{c_3}{c_1}(y_1+y_3)), \quad \alpha(v_3v_1)=((y_1+y_3)-\frac{c_1}{c_3}(y_2+y_3)),$$ for some non-zero integers $c_0,\dots ,c_5$. 
		Let $r_{v_iv_j}=c_i$ and $r_{v_jv_i}=c_j$. Then $$r_{v_iv_j}\alpha(v_iv_j)=-r_{v_jv_i}\alpha(v_jv_i)\in H^2(BT;\ZZ).$$
		

Let $e=v_iv_j$ be an edge in $\mathcal{G}$. Define $\theta_e(v_iv_j)=v_jv_i$ and $\theta_e(v_iv_{\ell})=v_jv_{\ell}$ if $v_iv_{\ell}$ and $v_jv_{\ell}$ are edges in $\mathcal{G}$ for some $\ell\in\{0,1,2,3\}$ and $\ell\neq j$. Then $\theta_e $ is a connection on each regular graph in this simplicial graph complex.

 Let $e=v_iv_j$ and $e{'}=v_iv_{\ell}$ be the edges in $\mathcal{G}$ such that $s(e)=s(e{'})$. Define $c_{e,e{'}}:=c_j.c_{i} \in \mathbb{Z}-\{0\}$. Then it satisfies the condition $$c_{e,e{'}} (\alpha(\theta_e(e{'}))- \alpha(e{'}))\equiv 0~ \text{mod}~ r_e\alpha(e).$$ Thus $(\mathcal{G},\alpha,\theta)$ is a simplicial GKM graph complex.    
		
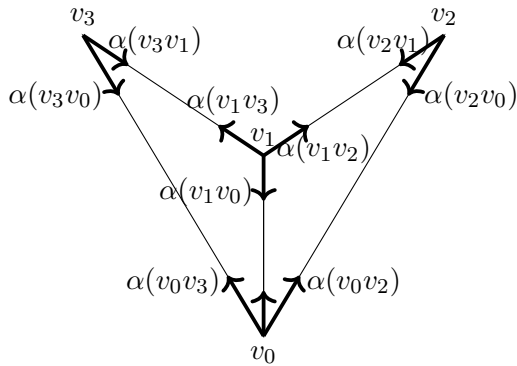
\begin{figure}
\begin{tikzpicture}[scale=.8]		
\draw (0,0)--(3,2)--(0,-3)--cycle;
				
\draw (0,0)--(-3,2)--(0,-3)--cycle;
				
\node[above] at (0, 0) {$v_1$};
				
\node[above] at (3,2) {$v_2$};
				
\node[below] at (0,-3) {$v_0$};
				
\node[above] at (-3,2) {$v_3$};
				
				\draw[->, line width=.5 mm] (0,0) to (.75,.5);
				
				\draw[->, line width=.5 mm] (0,0) to (0, -.75);
				
				\draw[->, line width=.5 mm] (0,0) to (-.75,.5);
				
				\draw[->, line width=.5 mm] (3,2) to (2.25, 1.5);
				
				\draw[->, line width=.5 mm] (-3,2) to (-2.25, 1.5);
				
				\draw[->, line width=.5 mm] (0, -3) to (0, -2.25);
				
				\draw[->, line width=.5 mm] (0, -3) to (.6, -2);
				
				\draw[->, line width=.5 mm] (0, -3) to (-.6, -2);
				
				\draw[->, line width=.5 mm] (3, 2) to (2.4, 1);
				
				\draw[->, line width=.5 mm] (-3, 2) to (-2.4, 1);

				\node [above] at (1, -.3) {$\alpha(v_1v_2)$};
				
				\node [above] at (-.5, .5) {$\alpha(v_1v_3)$};
				
				\node [above] at (2, 1.5) {$\alpha(v_2v_1)$};
				
				\node [above] at (-1.8, 1.5) {$\alpha(v_3v_1)$};
				
				\node [left] at (0, -.6) {$\alpha(v_1v_0)$};
				
				\node [above] at (1.5, -2.5) {$\alpha(v_0v_2)$};
				
				\node [above] at (-1.5, -2.5) {$\alpha(v_0v_3)$};
				
				\node [right] at (2.5, 1) {$\alpha(v_2v_0)$};
				
				\node [left] at (-2.5, 1) {$\alpha(v_3v_0)$};

			\end{tikzpicture}
			\caption{A simplicial GKM graph complex.}
			\label{Fig_gkm_graph}
		\end{figure}
		
	\end{example}

\begin{definition}
The equivariant cohomology ring of a simplicial GKM graph complex $(\mathcal{G},\alpha,\theta)$ is defined by $$H_T^{*}(\mathcal{G},\alpha,\theta):=\{f \colon V(\mathcal{G}) \rightarrow H^{*}(BT;\ZZ) ~~\big{|}~~ \widetilde{r}_e \alpha(e) ~\text{divides}~ (f(s(e))-f(t(e)))\},$$ where $\widetilde{r}_e$ is the smallest positive integer corresponding to  $e \in E(\mathcal{G})$ satisfying \eqref{eq_axial_map}. 
	\end{definition}

\begin{theorem}\label{Thm_DKS}
Let $\mathcal{K}$ be a compact simplicial GKM orbifold complex with respect to an effective action of the torus $T$ such that the geometric realization of $\mathcal{K}$ is homotopic to a $T$-CW complex. If all the isotropy subgroups of $T$ are connected and $H^{odd}(\mathcal{K};\mathbb{Z})=0$, then the equivariant cohomology ring $H_{T}^{*}(\mathcal{K};\mathbb{Z})$ is isomorphic to $H_{T}^{*}(\mathcal{G}_{\mathcal{K}}, \alpha, \theta)$ as $H_{T}^{*}(\{pt\};\mathbb{Z})$ algebra. 
\end{theorem}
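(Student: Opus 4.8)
The plan is to establish the isomorphism through a GKM-type localization argument combined with the stratification machinery developed earlier in the paper. First I would set up the restriction map to fixed points. Since $\mathcal{K}^T = V(\mathcal{G}_{\mathcal{K}})$ is finite, the inclusion $\mathcal{K}^T \hookrightarrow \mathcal{K}$ induces a ring homomorphism
\[
\iota^* \colon H_T^*(\mathcal{K};\ZZ) \longrightarrow H_T^*(\mathcal{K}^T;\ZZ) \cong \bigoplus_{v \in V(\mathcal{G}_{\mathcal{K}})} H^*(BT;\ZZ),
\]
which sends a class to the tuple of its restrictions $f(v) := \iota_v^*(-)$. The hypothesis $H^{odd}(\mathcal{K};\ZZ)=0$, together with the fact that $\mathcal{K}$ is homotopic to a $T$-CW complex, makes $\mathcal{K}$ integrally equivariantly formal (as noted in the remark following the definition of equivariant formality: vanishing odd cohomology and torsion-freeness suffice). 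Equivariant formality forces $\iota^*$ to be \emph{injective}, which identifies $H_T^*(\mathcal{K};\ZZ)$ with its image inside the product of polynomial rings over the fixed points.

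The heart of the argument is to pin down that image precisely as the subring $H_T^*(\mathcal{G}_{\mathcal{K}},\alpha,\theta)$ cut out by the divisibility conditions $\widetilde{r}_e\,\alpha(e) \mid (f(s(e)) - f(t(e)))$. For one containment, I would restrict each global class to the invariant spindles: for an edge $e$ corresponding to a spindle $S$ with fixed points $s(e), t(e)$, the $T$-equivariant cohomology of $S$ (an orbifold $S^2$) imposes exactly the congruence in \eqref{eq_axial_map}, so every element of the image satisfies the edge relations; hence the image lands in $H_T^*(\mathcal{G}_{\mathcal{K}},\alpha,\theta)$. The connectedness of all isotropy subgroups is what guarantees that the relevant weights pull back integrally and that the smallest integer $\widetilde{r}_e$ is the correct divisor, aligning the geometric self-intersection data with the combinatorial axial function. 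For the reverse containment (surjectivity onto the graph cohomology), I would induct along the filtration \eqref{eq_G-tratification} coming from the buildable/divisive structure: using the stratum $\mathcal{K}_j \setminus \mathcal{K}_{j-1} \cong \CC^{d_j}/L_j$ and the long exact sequence of the pair $(\mathcal{K}_j, \mathcal{K}_{j-1})$ in equivariant cohomology, one produces, at each step, an equivariant class whose restriction realizes a prescribed value at the new vertex $b_j$ while being compatible with the already-constructed values, so that any combinatorial datum satisfying the edge relations is hit.

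I expect the main obstacle to be the surjectivity step, specifically the bookkeeping of the orbifold weights under the finite quotients $L_j$ and the verification that the Thom/Euler classes of the geodesic cells (equivalently, the equivariant cohomology of $\CC^{d_j}/L_j$ relative to its boundary) restrict to the fixed points with exactly the divisibilities encoded by $\widetilde{r}_e\,\alpha(e)$. This is where the integral statement is delicate, since the naive Thom class lives only after inverting $|L_j|$; the hypothesis that isotropy groups are connected is precisely the leverage needed to control torsion and obtain the result over $\ZZ$ rather than merely over $\QQ$. I would reduce this to the single-spindle computation and the compatibility of the connection $\theta$ across intersecting regular graphs in the complex, invoking that the filtration respects the simplicial graph complex structure so the local contributions glue. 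The identification as $H_T^*(\{pt\};\ZZ)$-algebras is then immediate, since $\iota^*$ is a ring map that is the identity on the image of $\pi^* \colon H^*(BT;\ZZ) \to H_T^*(\mathcal{K};\ZZ)$.
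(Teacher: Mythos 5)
Your overall architecture (restrict to fixed points, show injectivity, identify the image with the edge divisibility conditions) is the right shape, but your surjectivity step rests on a hypothesis the theorem does not grant you. You propose to induct along the filtration \eqref{eq_G-tratification} with strata $\mathcal{K}_j\setminus\mathcal{K}_{j-1}\cong \CC^{d_j}/L_j$; that filtration exists only for \emph{buildable} simplicial GKM orbifold complexes, and the paper explicitly exhibits simplicial GKM orbifold complexes that are not buildable. Theorem \ref{Thm_DKS} assumes only compactness, a $T$-CW homotopy type, connected isotropy groups, and $H^{odd}(\mathcal{K};\ZZ)=0$, so your construction of classes stratum-by-stratum via Thom/Euler classes of geodesic cells is not available in the stated generality. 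The paper instead invokes the Chang-Skjelbred sequence
\[
0 \to H_{T}^{*}(\mathcal{K};\ZZ) \to H_{T}^{*}(\mathcal{K}_{0};\ZZ) \to H_{T}^{*+1}(\mathcal{K}_1,\mathcal{K}_{0};\ZZ),
\]
whose exactness follows from the connectedness of all isotropy subgroups together with the freeness of $H_T^*(\mathcal{K};\ZZ)$ coming from the degeneration of the Serre spectral sequence. Exactness here does both jobs at once: it gives injectivity of $\iota^*$ \emph{and} identifies the image as the kernel of the map to $H_T^{*+1}(\mathcal{K}_1,\mathcal{K}_0;\ZZ)$, which is then computed spindle-by-spindle (as in Goresky--Kottwitz--MacPherson and Darby--Kuroki--Song) to be exactly the divisibility conditions defining $H_T^*(\mathcal{G}_{\mathcal{K}},\alpha,\theta)$. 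Everything reduces to the one-skeleton, with no cell structure on $\mathcal{K}$ required.

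A secondary issue: you assert that equivariant formality alone forces $\iota^*$ to be injective over $\ZZ$. Integrally this is not automatic; the standard localization argument needs control over which elements of $H^*(BT;\ZZ)$ must be inverted, and that control is precisely what the connectedness of the isotropy subgroups provides. In your write-up the isotropy hypothesis is deployed only in the surjectivity/torsion discussion, so the injectivity claim as stated is unsupported. If you replace your surjectivity induction by the exactness of the integral Chang-Skjelbred sequence (placing the isotropy hypothesis there), both gaps close simultaneously and the argument coincides with the paper's.
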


\begin{proof}
The condition $H^{odd}(\mathcal{K};\ZZ)=0$ implies the Serre spectral sequence of the fibration $\mathcal{K} \xrightarrow{\iota} \mathcal{K}_{T} \xrightarrow{\pi} BT$ degenerate at $E_2$ page. Thus $H^*(\mathcal{K}_{T};\ZZ) \cong H^*(\mathcal{K};\ZZ) \otimes H^*(BT;\ZZ)$ as $\mathcal{K}$ is homotopic to finite $T$-CW complex and $T$ is compact. Since all the isotropy subgroups of this $T$-action are connected, we have the exactness of the Chang-Skjelbred sequence
$$ 0 \to H_{T}^{*}(\mathcal{K};\ZZ) \to H_{T}^{*}(\mathcal{K}_{0};\ZZ) \to H_{T}^{*+1}(\mathcal{K}_1,\mathcal{K}_{0};\ZZ),$$ where $\mathcal{K}_{0}$ denotes the set of all $T$-fixed point of $\mathcal{K}$ and $\mathcal{K}_{1}$ is the union of all zero and one-dimensional orbits in $\mathcal{K}$. Thus applying the arguments in the proof of \cite[Theorem 7.2]{GKM98} and \cite[Theorem 2.9]{DKS}, one can complete the proof for the simplicial GKM orbifold complex $\mathcal{K}$. 
	\end{proof}

\section{Equivariant Thom isomorphism}\label{sec_thom_iso}
At the beginning of this section, we discuss an explicit construction of an orbifold $G$-vector bundle. Then, we prove the $G$-equivariant Thom isomorphism for equivariant cohomology and equivariant $K$-theory for the orbifold $G$-vector bundles. This result generalizes \cite[Proposition 2.1]{SaSo}.

 Let $B$ be an effective orbifold and $\mathcal{A} := \{(\widetilde{V}_i,G_i,\phi_i)~|~i\in \mathcal{I}\}$ an orbifold atlas on $B$. Now assume that $(\widetilde{X}_i,\widetilde{V}_i,\widetilde{P}_i)$ is a $G_i$-invariant $\ell$-dimensional vector bundle for $i \in \mathcal{I}$ such that if there exists an embedding $\lambda \colon (\widetilde{V}_i,G_i,\phi_i) \rightarrow (\widetilde{V}_j,G_j,\phi_j)$ between orbifold charts then $\widetilde{X}_i =\lambda^{*}(\widetilde{X}_j)$, the pullback bundle along $\lambda$. Let  $\pi_i \colon \widetilde{X}_i \rightarrow X_i= \frac{\widetilde{X}_i}{G_i} $ be the orbit map. Then the triple $(\widetilde{X}_i, G_i, \pi_i)$ is an orbifold chart on $X_i$  for $i \in \mathcal{I}$.
	
If $ \phi_i(\widetilde{V}_i)\cap \phi_j(\widetilde{V}_j) \neq \emptyset$ for some $i, j \in \mathcal{I}$, there exists an orbifold chart $(\widetilde{V}_k,G_k,\phi_k) \in \mathcal{A}$ and two embeddings $$\lambda_1 \colon (\widetilde{V}_k,G_k,\phi_k) \to (\widetilde{V}_i,G_i,\phi_i) ~\mbox{and} ~\lambda_2 \colon (\widetilde{V}_k,G_k,\phi_k) \to (\widetilde{V}_j,G_j,\phi_j).$$ Then by the assumption $\widetilde{X}_k = \lambda_1^{*}(\widetilde{X}_i)  = \lambda_2^{*}(\widetilde{X}_j)$. Therefore, there exist two embeddings $$\widetilde{\lambda}_1 \colon (\widetilde{X}_k, G_k, \pi_k) \rightarrow (\widetilde{X}_i, G_i, \pi_i) ~\mbox{and}~ \widetilde{\lambda}_2 \colon (\widetilde{X}_k, G_k, \pi_k) \rightarrow (\widetilde{X}_j, G_j, \pi_j).$$

Note that the map $\widetilde{\lambda}_2 \widetilde{\lambda}_1^{-1} \colon \widetilde{\lambda}_1(\widetilde{X}_k) \rightarrow\widetilde{\lambda}_2(\widetilde{X}_k)$ is an equivariant diffeomorphism. Therefore, using the discussion in \cite[Section 1.3]{ALR}, 
we can construct an orbifold $X :=(\sqcup \frac{\widetilde{X}_i}{G_i}/ \sim)$ where $\pi_i(x_i)\sim \pi_j(x_j) $  if $x_j=\widetilde{\lambda}_2 \widetilde{\lambda}_1^{-1}(x_i)$ for  $x_i \in \widetilde{X}_i$ and $x_j \in \widetilde{X}_j$. The collection ${\{(\widetilde{X}_i,G_i,\pi_i) ~|~ i \in \mathcal{I}\}}$ is an orbifold atlas for $X$. Since $\widetilde{P}_i \colon \widetilde{X}_i  \rightarrow \widetilde{V}_i$ is $G_i$-invariant then it  induces a map $P_i \colon \frac{\widetilde{X_i}}{G_i} \rightarrow \frac{\widetilde{V}_i}{G_i} = \phi_i(V_i)$ for $i \in \mathcal{I}$. This gives an $\ell$-dimensional orbifold vector bundle $P \colon X \to B$, since $\{V_i=\phi_i(\widetilde{V}_i)~|~ i\in \mathcal{I}\}$ is an open covering of $B$. We denote this bundle by the triple $(X, B, P)$.
	
Let $G$ be a topological group. Then the vector bundle $\widetilde{P}_i \colon \widetilde{X}_i \rightarrow \widetilde{V}_i$ is called a $G$-vector bundle if $\widetilde{X}_i$ and $\widetilde{V}_i$ are $G$ spaces, $\widetilde{P}_i$ is a $G$-equivariant map and for each $g\in G$, the map $g \colon \widetilde{P}_i^{-1}(x)\to \widetilde{P}_i^{-1}(gx)$ is a linear map for $x\in \widetilde{V}_i$.

\begin{definition}\label{orbifold vector bundle}
Let $X$ and $B$ be two $G$-spaces such that  the orbifold vector bundle $\widetilde{P}_i \colon \widetilde{X}_i\rightarrow \widetilde{V}_i$ is a $G$-vector bundle and the action of $G$ (on $\widetilde{V}_i$ and $\widetilde{X}_i$) commutes with the action of $G_i$ for all $i\in \mathcal{I}$. Then the map $P \colon X \rightarrow B$ constructed above is called an orbifold $G$-vector bundle.   	
\end {definition}
		
\begin{remark}
The action of $G$ on $\widetilde{V}_i$ commutes with the action of $G_i$ for all $i\in \mathcal{I}$. Then this induces a $G$-action on $\frac{\widetilde{V}_i}{G_i}=V_i\subset B $ and the composition map $\phi_i \circ \widetilde{P}_i \colon \widetilde{X}_i \rightarrow V_i$ is also $G$-equivariant and this will induce the simple orbifold $G$-bundle $P_i \colon \frac{\widetilde{X}_i}{G_i} \rightarrow V_i$ which is considered in \cite[Section 2]{SaSo}. Here we study more general cases.
 \end{remark}
Let   $P\colon X \to B$ be an orbifold $G$-vector bundle as in Definition \ref{orbifold vector bundle}. Then, for each $i \in \mathcal{I}$, we have the disc bundle $D(\widetilde{X}_i) \to \widetilde{V}_i$ and the sphere bundle $S(\widetilde{X}_i) \to \widetilde{V}_i$. Since the group $G_i$ is finite, we get orbifold charts  $(D(\widetilde{X}_i), G_i,\pi_i)$ (as orbifold with boundary) and  $(S(\widetilde{X}_i), G_i,\pi_i)$ for $i \in \mathcal{I}$. Therefore, using the discussion in \cite[Section 1.3]{ALR}, we can construct an orbifold with boundary $D(X) \subset X$ and an orbifold $S(X) \subset D(X)$ such that the collection $\{(D(\widetilde{X}_i), G_i,\pi_i) ~|~ i\in \mathcal{I}\}$ and $\{(S(\widetilde{X}_i), G_i,\pi_i) ~|~ i \in \mathcal{I}\}$ are orbifold atlas for $D(X)$ and $S(X)$ respectively. 
The restrictions $P|_{D(X)} \colon D(X) \to B$ and $P|_{S(X)} \colon S(X) \to B$ are called respectively the $\mathbf{q}$-disc bundle and the $\mathbf{q}$-sphere bundle of $(X, B, P)$. Then, one can define the corresponding Thom space by $\text{Th}(X):=\frac{D(X)}{S(X)}$. The following result is proved in \cite[Section 2]{SaSo}. 

		
\begin{proposition}[Thom isomorphism for global quotient]\label{prop_thm_local}
Let $\mathcal{E}^*_G$ be one of $H^*_G$ and $K^*_G$ and $ P \colon (X/G_{f}) \to B$ be an $\ell$-dimensional simple orbifold $G$-bundle for some finite group $G_{f}$. Then $P$  induces the following isomorphism 
$$P^* \colon \mathcal{E}^*_G(B; \QQ) \to \mathcal{E}^{*+\ell}_G(\text{Th}(X/G_{f}); \QQ).$$
\end{proposition}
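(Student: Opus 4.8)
The plan is to lift the problem to the honest $G$-equivariant vector bundle upstairs, apply the classical equivariant Thom isomorphism there, and then descend to the global quotient by taking $G_f$-invariants, exploiting that $|G_f|$ is invertible in $\QQ$. Concretely, I would write the simple orbifold $G$-bundle as $\widetilde{X}/G_f \to \widetilde{V}/G_f = B$, where $\widetilde{P}\colon \widetilde{X}\to \widetilde{V}$ is an honest $\ell$-dimensional vector bundle carrying commuting actions of $G$ and of the finite group $G_f$, with complex fibres as in the GKM setting of the paper (the summands $\mathcal{V}(\alpha_i)$).

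First I would record the honest Thom isomorphism. Since the fibres are complex representations, $\widetilde{X}\to\widetilde{V}$ is a complex $(G\times G_f)$-vector bundle, so it admits a canonical Thom class $u\in\mathcal{E}^\ell_G(\text{Th}(\widetilde{X}))$ in both $\mathcal{E}^*_G=H^*_G$ (the equivariant Euler/Thom class) and $\mathcal{E}^*_G=K^*_G$ (the $K$-theoretic Thom class of a complex bundle). Cup product with $u$ yields the classical equivariant Thom isomorphism $\widetilde{P}^*\colon \mathcal{E}^*_G(\widetilde{V})\xrightarrow{\cong}\mathcal{E}^{*+\ell}_G(\text{Th}(\widetilde{X}))$. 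Because $G_f$ commutes with $G$ and preserves the complex structure, the class $u$ is $G_f$-invariant by naturality, so this isomorphism is a map of $\QQ[G_f]$-modules.

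Next I would descend via invariants. Since $G_f$ commutes with the $G$-action it descends to the Borel construction, giving $(Y/G_f)_G \cong (Y_G)/G_f$ for any $G$-space $Y$ on which $G_f$ acts. The transfer argument for the finite group $G_f$ (the composite of transfer and restriction is multiplication by $|G_f|$, which is invertible in $\QQ$) then gives $\mathcal{E}^*_G(Y/G_f;\QQ)\cong \mathcal{E}^*_G(Y;\QQ)^{G_f}$ for both $H^*_G$ (Cartan--Leray) and $K^*_G$ (rational Atiyah--Segal transfer). Applying this to $Y=\widetilde{V}$ gives $\mathcal{E}^*_G(B;\QQ)\cong\mathcal{E}^*_G(\widetilde{V};\QQ)^{G_f}$, and applying the reduced version to the pointed $G$-space $\text{Th}(\widetilde{X})$---using that $\text{Th}(X/G_f)=\text{Th}(\widetilde{X})/G_f$, since the disc and sphere bundles descend and collapsing commutes with the quotient---gives $\mathcal{E}^{*+\ell}_G(\text{Th}(X/G_f);\QQ)\cong\mathcal{E}^{*+\ell}_G(\text{Th}(\widetilde{X});\QQ)^{G_f}$. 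Restricting the $G_f$-equivariant Thom isomorphism of the previous step to $G_f$-invariants then produces $P^*$, and since an isomorphism of $\QQ[G_f]$-modules restricts to an isomorphism on invariants, $P^*$ is an isomorphism.

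The main obstacle I anticipate is the clean justification of $\mathcal{E}^*_G(Y/G_f;\QQ)\cong\mathcal{E}^*_G(Y;\QQ)^{G_f}$ uniformly for both theories and its compatibility with the Thom class---in particular the $K$-theory case, where one must invoke the rational transfer rather than a collapsing spectral sequence, and verify that this transfer is natural enough to commute with multiplication by the invariant class $u$. Verifying that the reduced theory of the quotient Thom space really is the $G_f$-invariant part (so that the basepoint and the long exact sequence of the pair $(D(\widetilde{X}),S(\widetilde{X}))$ descend correctly) is the other routine-but-careful point, and it is precisely where the hypothesis of rational coefficients is essential.
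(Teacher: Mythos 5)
Your argument is essentially the one the paper relies on: the paper does not reprove Proposition~\ref{prop_thm_local} but cites \cite[Section 2]{SaSo}, where the proof proceeds exactly as you describe --- lift to the honest $G$-vector bundle $\widetilde{X}\to\widetilde{V}$, apply the classical equivariant Thom isomorphism there, and descend to $G_f$-invariants using that $|G_f|$ is invertible in $\QQ$, so that $\mathcal{E}^*_G(Y/G_f;\QQ)\cong\mathcal{E}^*_G(Y;\QQ)^{G_f}$ and the $G_f$-invariant Thom class passes to the quotient. Your proposal is correct and matches that route, including the two points you flag as needing care.
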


Next, we prove the Thom isomorphism for any orbifold $G$-vector bundles. 

\begin{proposition}[Thom isomorphism for orbifold $G$-vector bundle]\label{prop_thm_global}
Let $\mathcal{E}^*_G$ be one of $H^*_G$ and $K^*_G$, and  $P \colon X \to B$ an $\ell$-dimensional orbifold $G$-vector bundle as in Definition \ref{orbifold vector bundle}. Suppose that  $G$- and $G_i$-representations commute on each fiber of $\widetilde{P}_i \colon \widetilde{X}_i \to \widetilde{V}_i$ for each $i \in \mathcal{I}$. If $B$ is compact, then the map  $$P^* \colon \mathcal{E}^*_G(B;\QQ) \to \mathcal{E}^{*+\ell}_G(\text{Th}(X); \QQ)$$ is an isomorphism.
\end{proposition}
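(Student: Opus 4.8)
The plan is to deduce the general statement from the global-quotient case already established in Proposition~\ref{prop_thm_local} by a Mayer--Vietoris induction over a finite chart cover, which compactness of $B$ makes available. First I would observe that the open sets $V_i = \phi_i(\widetilde{V}_i)$ form an open cover of $B$, and by compactness finitely many of them, say $V_1, \dots, V_N$, suffice. Over each $V_i$ the bundle $P$ restricts to the simple orbifold $G$-bundle $\widetilde{X}_i/G_i \to V_i$ (the $G$- and $G_i$-actions commute by hypothesis), so Proposition~\ref{prop_thm_local} gives the Thom isomorphism over each single chart. This is the base case, and the whole argument is an induction on the number $N$ of charts, writing $B = U \cup V_N$ with $U = V_1 \cup \cdots \cup V_{N-1}$.

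The technical heart is a relative Mayer--Vietoris ladder. Rather than working with the Thom space directly, I would use the pair $(D(X), S(X))$, whose relative $G$-equivariant cohomology (respectively $K$-theory) computes $\widetilde{\mathcal{E}}^{\,*}_G(\mathrm{Th}(X);\QQ)$. Restricting to $U$, to $V_N$, and to $U \cap V_N$ produces the corresponding $\mathbf{q}$-disc and $\mathbf{q}$-sphere pairs together with a relative Mayer--Vietoris long exact sequence for the top row, which sits above the ordinary Mayer--Vietoris sequence of $B = U \cup V_N$ in the base row. The Thom maps $P^*$, realized as cup product with the equivariant Thom class, commute with restriction and hence assemble into a commutative ladder between these two sequences. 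Over $U$ and over $U \cap V_N$ the maps are isomorphisms by the inductive hypothesis, and over $V_N$ by the base case; the five lemma then forces $P^*$ to be an isomorphism over $B$.

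The hard part will be two bookkeeping points. First, the intersection $U \cap V_N$ is not a single chart: by the orbifold-atlas compatibility in the construction of $(X,B,P)$, wherever $\phi_i(\widetilde{V}_i) \cap \phi_N(\widetilde{V}_N) \neq \emptyset$ there is a refining chart $(\widetilde{V}_k, G_k, \phi_k)$ with $\widetilde{X}_k = \lambda_1^*\widetilde{X}_i = \lambda_2^*\widetilde{X}_N$, over which $P$ again restricts to a simple orbifold $G$-bundle; so the base case applies on each piece, but one must run a nested induction on a finite chart cover of $U \cap V_N$ to complete the inductive step cleanly. Second, one must verify that the local Thom classes furnished by Proposition~\ref{prop_thm_local} agree on overlaps, so that the local Thom isomorphisms genuinely form a morphism of Mayer--Vietoris sequences; this is exactly where the identities $\widetilde{X}_k = \lambda^*\widetilde{X}_j$ and the commuting $G$-, $G_i$-actions are used, since naturality of the Thom class under the bundle embeddings $\widetilde{\lambda}_1, \widetilde{\lambda}_2$ guarantees the restricted classes coincide. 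Throughout, rational coefficients are essential, because Proposition~\ref{prop_thm_local} rests on a transfer/averaging argument for the finite groups $G_i$ that is available only after inverting the orders $|G_i|$; and for the $K$-theory statement one additionally invokes the exactness of equivariant Mayer--Vietoris on these compact orbifold pairs.
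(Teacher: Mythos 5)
Your proposal follows essentially the same route as the paper's proof: compactness gives a finite cover by invariant chart neighborhoods, the single-chart case is Proposition~\ref{prop_thm_local}, and the general case follows by induction on the number of charts via a commutative Mayer--Vietoris ladder and the five lemma. Your additional care about the intersection $U \cap V_N$ not being a single chart and about the compatibility of local Thom classes on overlaps fills in details the paper leaves implicit (it handles $k=2$ by restricting the chart of $V_1$ to $W=V_1\cap V_2$ and dismisses $k>2$ with ``inductive arguments''), so your write-up is, if anything, more complete.
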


\begin{proof}
Since $B$ is compact, there is a finite open cover $V_1, \ldots, V_k$ such that each $V_i$ has an orbifold chart and each $V_i$ is $G$-invariant. So each restriction $P_i \colon \frac{{\widetilde{X}}_i}{G_i} \to V_i$ satisfies the hypothesis in Proposition \ref{prop_thm_local}. If $k=2$ (i.e. $B=V_1\cup V_2$) and $W:=V_1\cap V_2~ (\subset V_1)$ then uniformization of $V_1$ induces an orbifold chart for $W$. Thus $P_W \colon {X|_W} \to W$ also satisfies the hypothesis in Proposition \ref{prop_thm_local} where $X|_{W}= P^{-1}(W)$.  Now using the Mayer-Vietoris sequence, we get the following two horizontal long exact sequences such that each square of the following diagram commutes.
\[
\adjustbox{scale=.95,center}{
 \begin{tikzcd}
\mathcal{E}_G^{*-\ell-1}(W)\arrow{r} \arrow{d}{P^*_W} & \mathcal{E}_G^{*-\ell}(B) \arrow{r} \arrow{d}{P^*} &   \mathcal{E}_G^{*-\ell}(V_1)\oplus \mathcal{E}_G^{*-\ell}(V_2) \arrow{d}{P_1^* \oplus P_2^*} \arrow{r} & 	\mathcal{E}_G^{*-\ell}(W)\arrow{d}{P^*_W}\\
\mathcal{E}_G^{*-1}(\text{Th}(X|_{W}) \arrow{r} & \mathcal{E}_G^{*}(\text{Th}(X)) \arrow{r} & \mathcal{E}_G^{*}(\text{Th}(X|_{V_1})\oplus \mathcal{E}_G^{*}(\text{Th}(X|_{V_2}) \arrow{r} & \mathcal{E}_G^{*}(\text{Th}(X|_W).
\end{tikzcd}
}
\]

 Thus, using the five-lemma, we get that $ P^* \colon \mathcal{E}^*_G(B; \QQ) \to \mathcal{E}^{*+\ell}_G(\text{Th}(X); \QQ)$ is an isomorphism.  Now for $k>2$ one can complete the proof by the inductive arguments. 
\end{proof}

\section{Generalized equivariant cohomologies of simplicial GKM orbifold complexes}\label{sec_gen_cohom}
 In this section, we extend the result \cite[Theorem 2.3]{HHH} to a broader category of $G$-spaces equipped with singular invariant stratification.
 We give a presentation of the equivariant cohomology ring and the equivariant $K$-theory ring of a buildable simplicial GKM orbifold complex with rational coefficients. We describe the integral equivariant cohomology ring, equivariant $K$-theory ring and equivariant cobordism ring of a divisive simplicial GKM orbifold complex.
 
One can define the Thom class of 
an orbifold $G$-vector bundle  similarly to the usual $G$-vector bundle. Briefly, an element $u \in \mathcal{E}^*_G(\text{Th}(X))$ is called the Thom class of an orbifold $G$-vector bundle
$P \colon X \to B$ if for each closed subgroup $H$ of $G$ and $x \in B^{H}$, the restriction of $u$
to $X|_{G{\cdot}x}:=P^{-1}({G{\cdot}x})$ 
is a generator of $$\mathcal{E}^*_G(\text{Th}(X|_{G{\cdot}x}))
\cong \mathcal{E}^*_{H}(D(X|_{G{\cdot}x}), S(X|_{G{\cdot}x})).$$ Note that the Thom class $u$
is natural under pullback. The restriction of the Thom
class $u$ to the base $B$ via the zero section $s \colon B \to X$ is called the $G$-equivariant
Euler class $e_G(P) := s^*(u) \in \mathcal{E}^{\ast}_G(B)$.

Now we consider the following $G$-invariant stratification 
\begin{equation}\label{eq_g-tratification}
\{pt\}=Y_0 \subseteq Y_1 \subseteq Y_2 \subseteq \cdots 
\end{equation}
of a $G$-space $Y$ such that $Y= \bigcup_{j \geq 0} Y_j$ and each successive quotient
$Y_j/Y_{j-1}$ is homeomorphic to the Thom space $\text{Th}(X_j)$
of an orbifold $G$-vector bundle $\xi^j \colon X_j \to B_j$. 
Therefore, $Y$ can be built from $Y_0$ inductively by attaching $D(X_j)$ to $Y_{j-1}$ via some $G$-equivariant maps
$$\eta_j \colon S(X_j) \to Y_{j-1},$$ for $j\geq 1$.
This gives the following cofibration $$Y_{j-1} \to Y_j \to \text{Th}(X_j).$$ 
Then, one gets the following proposition about the generalized equivariant cohomologies with rational coefficients by induction on the stratification and by a similar argument to the proof of \cite[Theorem 2.3]{HHH}.

\begin{proposition}\label{prop_gen_cohom}
Let $Y$ be a $G$-space with a stratification as in \eqref{eq_g-tratification} and $\mathcal{E}^*_G=H^*_G ~\text{or}~ K^*_G$. Assume that each equivariant Euler class
$e_G(\xi^j) \in \mathcal{E}^*_G(B_j)$ for the orbifold $G$-vector bundle $\xi^j$
is not a zero divisor. Then the equivariant inclusion $\iota \colon \bigsqcup_{j\geq 0} B_j \hookrightarrow Y$ induces an injection 
\begin{equation}\label{eq_injectivity}
\iota^* \colon \mathcal{E}^*_G(Y) \to \bigoplus_{j\geq 0} \mathcal{E}^*_G(B_j).
\end{equation}
\end{proposition}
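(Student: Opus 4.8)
The plan is to prove the injectivity of $\iota^*$ by induction on the stratification, mimicking the Harada-Henriques-Holm argument but working one step at a time through the cofibrations $Y_{j-1} \to Y_j \to \text{Th}(X_j)$. First I would set up the induction: for each finite stage $Y_N = \bigcup_{j=0}^N Y_j$, I claim that the inclusion $\bigsqcup_{j=0}^N B_j \hookrightarrow Y_N$ induces an injection on $\mathcal{E}^*_G$, and then pass to the limit (using that generalized equivariant cohomology converts the inverse limit of the $Y_N$ suitably, or more simply observing that any element of $\mathcal{E}^*_G(Y)$ is detected at some finite stage). The base case $N=0$ is trivial since $Y_0 = B_0 = \{pt\}$.

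For the inductive step I would apply $\mathcal{E}^*_G$ to the cofibration $Y_{N-1} \to Y_N \to \text{Th}(X_N)$, obtaining the long exact sequence
\[
\cdots \to \mathcal{E}^*_G(\text{Th}(X_N)) \xrightarrow{q^*} \mathcal{E}^*_G(Y_N) \xrightarrow{r^*} \mathcal{E}^*_G(Y_{N-1}) \to \cdots.
\]
By the Thom isomorphism (Proposition \ref{prop_thm_global}), $\mathcal{E}^*_G(\text{Th}(X_N)) \cong \mathcal{E}^{*-\ell}_G(B_N)$, and the composite of the Thom class map with restriction to the base $B_N$ via the zero section is precisely multiplication by the equivariant Euler class $e_G(\xi^N)$. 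The hypothesis that $e_G(\xi^N)$ is not a zero divisor is exactly what forces this connecting-type map to be injective, so the long exact sequence breaks into short exact sequences
\[
0 \to \mathcal{E}^*_G(\text{Th}(X_N)) \xrightarrow{q^*} \mathcal{E}^*_G(Y_N) \xrightarrow{r^*} \mathcal{E}^*_G(Y_{N-1}) \to 0.
\]
This splitting is the technical heart of the argument and is where I expect the main obstacle to lie: one must verify carefully that the boundary map $\mathcal{E}^*_G(Y_{N-1}) \to \mathcal{E}^{*+1}_G(\text{Th}(X_N))$ is zero, equivalently that $q^*$ is injective, and this is precisely controlled by the non-zero-divisor hypothesis on the Euler class together with the Thom isomorphism identifying the relevant map with Euler-class multiplication.

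Granting the short exact sequence, I would finish by a diagram chase. Restriction to the disjoint union of bases fits into a commuting diagram whose top row is the short exact sequence above and whose bottom row is the (split) direct sum decomposition $\bigoplus_{j=0}^N \mathcal{E}^*_G(B_j) = \mathcal{E}^*_G(B_N) \oplus \bigoplus_{j=0}^{N-1} \mathcal{E}^*_G(B_j)$. The left vertical map is the Thom isomorphism followed by restriction to $B_N$ (an injection, again by the non-zero-divisor property), and the right vertical map is the inductive injection for $Y_{N-1}$. A straightforward diagram chase then shows the middle map $\iota^* \colon \mathcal{E}^*_G(Y_N) \to \bigoplus_{j=0}^N \mathcal{E}^*_G(B_j)$ is injective. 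Passing to the direct limit over $N$ yields the desired injection \eqref{eq_injectivity}. I would note that the whole argument requires only the Thom isomorphism with rational coefficients and the Euler-class non-zero-divisor hypothesis, so it applies uniformly to both $\mathcal{E}^*_G = H^*_G$ and $\mathcal{E}^*_G = K^*_G$.
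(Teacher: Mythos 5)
Your proposal is correct and follows essentially the same route as the paper, which proves this proposition by induction on the stratification via the cofibrations $Y_{j-1} \to Y_j \to \text{Th}(X_j)$ and the argument of \cite[Theorem 2.3]{HHH}, with Proposition \ref{prop_thm_global} supplying the Thom isomorphism for the orbifold $G$-vector bundles $\xi^j$. You correctly isolate the key point: under the Thom isomorphism the composite $\mathcal{E}^*_G(\text{Th}(X_N)) \to \mathcal{E}^*_G(Y_N) \to \mathcal{E}^*_G(B_N)$ is multiplication by $e_G(\xi^N)$, so the non-zero-divisor hypothesis splits the long exact sequence and the diagram chase closes the induction.
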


An approach to computing the image of $\iota^*$ in \eqref{eq_injectivity} has been discussed in \cite[Section 3]{HHH} when $\xi^j$'s are $G$-vector bundles. 

Let $Y$ be a $G$-space with the $G$-stratification as in \eqref{eq_g-tratification} which satisfies the following assumptions.

\begin{enumerate}
\item[(A1)] Each orbifold $G$-vector bundle $\xi^j \colon X_j \to B_j$ has the following decomposition 
$$(\xi^j \colon X_j \to B_j) \cong \bigoplus_{s < j} (\xi^{j s} 
\colon X_{js} \to B_j)$$ into orbifold $G$-vector bundles $\xi^{j s}$, (where $X_{j s}$ can be trivial).

\item[(A2)] The restriction of the attaching map $\eta_j \colon S(X_j) \to Y_{j-1}$ on $S(X_{js})$ satisfies
 $${\eta_j}|_{S(X_{j s})} = f_{j s} \circ \xi^{j s}$$ 
for some $G$-equivariant map $f_{j s} \colon B_j \to B_{s}\subset Y_{j-1} $, for $s<j$. 

\item[(A3)] The equivariant Euler classes $\{e_G(\xi^{js}) ; s<j\}$ are not zero divisors
and pairwise relatively prime in $\mathcal{E}^{\ast}_G(B_{j})$. 
\end{enumerate} 

The map $f_{js}$ induces $f_{js}^* \colon \mathcal{E}^*_{T}(B_s; \QQ) \to \mathcal{E}^*_{T}(B_j; \QQ)$. 
Note that under the above assumptions
on a $G$-space $Y$ with the property as in \eqref{eq_g-tratification}, 
one may obtain the following result with rational coefficients. 

\begin{proposition}\label{prop:equivariant_k-theory}
Let $Y$ be a $G$-space with a $G$-stratification as in \eqref{eq_g-tratification} such that assumptions {\rm (A1)},{\rm (A2)} and {\rm (A3)} are satisfied.
Then the image of $\iota^* \colon \mathcal{E}^*_G(Y) \to \bigoplus_{j \geq 0} \mathcal{E}^*_G(B_j)$ is
$$\Gamma_Y:= \Big \{ (x_j) \in \bigoplus_{j \geq 0} \mathcal{E}^*_{G}(B_j) ~ \big{|} ~ e_G(\xi^{j s})~
\mbox{divides} ~ x_j - f^*_{j s}(x_{s}) ~\mbox{for all}~ s < j\Big \}.$$
\end{proposition}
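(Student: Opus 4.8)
## Proof Proposal

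The plan is to prove the two inclusions $\operatorname{im}(\iota^*) \subseteq \Gamma_Y$ and $\Gamma_Y \subseteq \operatorname{im}(\iota^*)$ separately, working by induction on the stratification filtration $\{Y_j\}$. By Proposition \ref{prop_gen_cohom}, we already know $\iota^*$ is injective, so it suffices to pin down its image. Throughout I would work with $\mathcal{E}^*_G$ rational coefficients, and I would set $Y^{(N)} := Y_N$ together with the truncated inclusion $\iota_N^* \colon \mathcal{E}^*_G(Y_N) \to \bigoplus_{j \le N} \mathcal{E}^*_G(B_j)$, proving the statement for each $Y_N$ and passing to the limit.

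For the inclusion $\operatorname{im}(\iota^*) \subseteq \Gamma_Y$, I would fix a class $y \in \mathcal{E}^*_G(Y)$ and an index $s < j$ and show that $e_G(\xi^{js})$ divides $x_j - f_{js}^*(x_s)$, where $x_j = \iota_{B_j}^*(y)$. The key geometric input is assumption (A2): the attaching map $\eta_j$ restricted to $S(X_{js})$ factors as $f_{js} \circ \xi^{js}$. This means that the composite $B_j \hookrightarrow S(X_{js}) \to Y_{j-1}$ (via the zero section composed with $\eta_j$) lands on $B_s$ through $f_{js}$, which forces the relation $x_j - f_{js}^*(x_s)$ to come from a relative class in $\mathcal{E}^*_G(\mathrm{Th}(X_{js}))$. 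By the Thom isomorphism (Proposition \ref{prop_thm_global}) and the definition of the Euler class as $e_G(\xi^{js}) = s^*(u)$, restricting the Thom class through the zero section introduces a factor of $e_G(\xi^{js})$, giving the divisibility. I would spell this out using the cofibration $Y_{j-1} \to Y_j \to \mathrm{Th}(X_j)$ together with the splitting in (A1) to isolate the $s$-component.

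For the reverse inclusion $\Gamma_Y \subseteq \operatorname{im}(\iota^*)$, I would argue by induction on $N$. Suppose $(x_j)_{j \le N} \in \Gamma_Y$ is given; by the inductive hypothesis the truncation $(x_j)_{j < N}$ is realized by some $\tilde{y} \in \mathcal{E}^*_G(Y_{N-1})$. Consider the long exact sequence of the pair $(Y_N, Y_{N-1})$, or equivalently the cofibration sequence with $\mathrm{Th}(X_N)$. Using (A1) to write $\xi^N = \bigoplus_{s<N} \xi^{Ns}$ and (A3) that the Euler classes $e_G(\xi^{Ns})$ are pairwise relatively prime nonzerodivisors, the divisibility conditions $e_G(\xi^{Ns}) \mid x_N - f_{Ns}^*(x_s)$ together with a Chinese-remainder-type argument let me conclude that $x_N - (\text{the class pulled back from } Y_{N-1})$ lies in the image of the Euler class $e_G(\xi^N) = \prod_s e_G(\xi^{Ns})$. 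By the Thom isomorphism this exactly identifies a lift into $\mathcal{E}^*_G(\mathrm{Th}(X_N))$, which glues $\tilde{y}$ to a class on $Y_N$ restricting to $x_N$. The compatibility of the attaching map via (A2) ensures this lift restricts correctly to all the lower $B_s$ as well.

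The main obstacle I anticipate is the surjectivity step, specifically the Chinese-remainder argument in (A3): I must show that the separate divisibility conditions, one for each $s < N$, can be simultaneously satisfied by a single class pulled back through the Thom isomorphism with denominator the full product $e_G(\xi^N) = \prod_{s<N} e_G(\xi^{Ns})$. This is where pairwise relative primality and the nonzerodivisor hypotheses are essential, and where the orbifold setting requires care — the Euler classes live in $\mathcal{E}^*_G(B_j; \QQ)$ and I must verify that the relevant localization or primality statements hold in this (possibly non-regular) ring. The other delicate point is checking that the lifted class, built over $\mathrm{Th}(X_N)$ via the splitting (A1), restricts to the prescribed $x_s$ on each $B_s$ and not merely modulo the Euler class; this is controlled by (A2) but needs a careful diagram chase through the cofibration and the naturality of the Thom class under the pullbacks $f_{Ns}$.
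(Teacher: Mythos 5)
Your proposal is correct and takes essentially the same route as the paper: the paper's proof simply invokes the orbifold Thom isomorphism (Proposition \ref{prop_thm_global}) and defers to the inductive argument of Harada--Henriques--Holm (Theorem 3.1) and Sarkar--Song (Proposition 2.3), which is precisely the two-inclusion, induction-on-strata argument with the Thom-class/Euler-class divisibility and the Chinese-remainder step that you spell out. The delicate points you flag (pairwise coprimality of the Euler classes in (A3) and the compatibility of restrictions via (A2)) are exactly the points handled in those cited proofs.
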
 
 
 \begin{proof}
Using Proposition \ref{prop_thm_global}, one can get the proof by the arguments similar to the proof of \cite[Theorem 3.1]{HHH} and \cite[Proposition 2.3]{SaSo}. 
 \end{proof}

\begin{remark}\label{rmk_non_abe_case}
If $G$ is non-abelian, one can get a similar conclusion as in Proposition \ref{prop:equivariant_k-theory} whenever the $G$-space has a $G$-stratification as in \eqref{eq_g-tratification} and satisfies the conditions similar to $(A1), (A2)$ and $(A3)$. Also, if $\xi^j$'s are $G$-vector bundles, then the hypothesis in Proposition \ref{prop:equivariant_k-theory} is the same as the hypothesis \cite[Theorem 3.1]{HHH}.   
\end{remark}
 
Now we study some generalized equivariant cohomologies of simplicial GKM orbifold complexes.

 \begin{lemma}\label{prop_gkm_cond}
 The $T$-invariant stratification of a buildable simplicial GKM orbifold complex satisfies the conditions (A1), (A2) and (A3).
 \end{lemma}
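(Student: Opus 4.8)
The plan is to verify each of the three GKM/HHH conditions directly against the construction of the stratification
\[
\{pt\}=\mathcal{K}_0 \subset \mathcal{K}_1 \subset \cdots \subset \mathcal{K}_m =\mathcal{K}
\]
given in Definition \ref{def_filt_simp_gkm}, where each successive quotient $\mathcal{K}_j\setminus\mathcal{K}_{j-1}$ is $T$-equivariantly homeomorphic to $\CC^{d_j}/L_j$, i.e.\ to the Thom space of the orbifold $T$-vector bundle $\xi^j\colon M_j \to \{pt\}=B_j$ whose total space is built from the tangential data $\oplus_{j'}\mathcal{V}(\widetilde{\alpha}_{j j'})$ at the fixed point $b_j$. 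So here every base $B_j$ is a point, which will simplify matters considerably: the maps $f_{js}$ are forced to be the (unique) constant maps, and the equivariant Euler classes live in $\mathcal{E}^*_T(\{pt\})$.

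First I would record the fiber decomposition needed for (A1). By construction the fiber of $\xi^j$ over $b_j$ is $\oplus_{j'=1}^{d_j}\mathcal{V}(\alpha_{j j'})$, a sum of the one-dimensional $T$-representations indexed by the edges $\{e_{j j'}\}_{j'=1}^{d_j}$ in $F_j=E_j\setminus E_{j-1}$ emanating from $b_j$. Each such edge $e_{j j'}$ runs from $b_j$ back to some earlier vertex $b_s$ with $s<j$ (this is exactly the content of the filtration: the new vertex $b_j$ is adjacent only to already-chosen vertices along the edges that get added at step $j$). I would set $\xi^{js}$ to be the sub-bundle given by the sum of those $\mathcal{V}(\alpha_{j j'})$ whose edge $e_{j j'}$ terminates at $b_s$; this yields the required decomposition $\xi^j \cong \bigoplus_{s<j}\xi^{js}$ indexed by $s<j$, with $\xi^{js}$ trivial when no edge at $b_j$ connects to $b_s$.

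Next, for (A2), I would examine the attaching map $\eta_j\colon S(X_j)\to \mathcal{K}_{j-1}$ and show that its restriction to the sphere bundle $S(X_{js})$ of the sub-representation lands in (and factors through) $B_s=\{b_s\}$. Geometrically this is the statement that the $T$-invariant spindle $h^{-1}(e_{j j'})\cong S^2_{j j'}\setminus\{b_j\}$ corresponding to each edge has its remaining boundary fixed point at exactly $b_s$, so the restricted attaching map is the constant map $f_{js}$ to $b_s$ composed with $\xi^{js}$; since $B_j$ is a point, $f_{js}$ is automatically the unique map $\{pt\}\to\{pt\}$ and the factorization is trivial to state. Finally, for (A3), the equivariant Euler class $e_T(\xi^{js})$ is the product of the characters $\alpha_{j j'}$ of the edges joining $b_j$ to $b_s$; I would invoke the GKM condition from Definition \ref{def_gkm_orbifold}(2), namely that the characters $\alpha_1,\dots,\alpha_n$ at each fixed point $b_j$ are pairwise linearly independent, to conclude that distinct $e_T(\xi^{js})$ and $e_T(\xi^{js'})$ are relatively prime and that none is a zero divisor in $\mathcal{E}^*_T(\{pt\})$ (a polynomial ring in $H^*_T$, a Laurent-type representation ring in $K^*_T$).

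I expect the only genuine subtlety, and hence the main obstacle, to be condition (A3): pairwise linear independence of characters is immediate for the Euler classes of single line summands, but for (A3) I must control the \emph{products} of characters $e_T(\xi^{js})=\prod \alpha_{j j'}$ and argue they are pairwise coprime and non-zero-divisors, which requires that characters attached to edges landing at different $b_s$ never share a common linear factor. In the $H^*_T$ case this follows from unique factorization in a polynomial ring together with pairwise independence, but in the $K^*_T$ (representation ring) setting the notions of ``divides'' and ``relatively prime'' are more delicate, so I would treat that case by passing through the relation between $K$-theoretic and cohomological Euler classes, or by arguing directly that no two of the relevant $R(T)$-elements share a common prime factor. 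Once (A3) is secured, (A1) and (A2) are essentially bookkeeping about the filtration, and the proof concludes by simply assembling these verifications.
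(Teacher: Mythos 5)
Your proof follows essentially the same route as the paper's: over each point base $B_j=h^{-1}(b_j)$ the bundle $\xi^j$ is split into the line summands $\mathcal{V}(\alpha_{j_s})$ attached to the edges of $F_j=E_j\setminus E_{j-1}$, the maps $f_{js}$ are the constant maps between fixed points, and (A3) is deduced from the pairwise linear independence of the characters in the GKM condition. The only divergence is your worry about $e_T(\xi^{js})$ being a \emph{product} of characters: since edges are elements of $V\times V$ there is at most one edge from $b_j$ to any earlier $b_s$, so each $\xi^{js}$ is a single line bundle with Euler class $\alpha_{j_s}$ (exactly as the paper takes it), and the delicate coprimality-of-products issue you flag does not arise.
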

 \begin{proof}
  Let $\mathcal{K}$ be a  buildable simplicial GKM orbifold complex. Then, by Definition \ref{def_filt_simp_gkm}, we have a filtration $$\{pt\}=\mathcal{K}_0\subset \mathcal{K}_1\subset \mathcal{K}_2\subset \cdots \subset \mathcal{K}_m=\mathcal{K}$$
such that $\mathcal{K}_j$ is closed, $h^{-1}(E_j\setminus E_{j-1})$ is the one skeleton of $ \mathcal{K}_j\setminus \mathcal{K}_{j-1}$ and  $M_j := \mathcal{K}_j\setminus \mathcal{K}_{j-1}$ is $T$-equivariantly homeomorphic to $\CC^{d_j}/L_j$ for some 
finite group $L_j$ for $j=0,1,2,\dots,m$.
Here, $d_j$ is same as the number of edges adjacent to $b_j$ in $F_j=E_j \setminus E_{j-1}$.
Consider the $G$-vector bundle $\psi^j \colon \CC^{d_j}/L_j \to h^{-1}(b_j) $ over $h^{-1}(b_j)$. Note that $d_j\leq j$ follows from the filtration on the simplicial graph complex  $\mathcal{G}$ in Definition \ref{def_filt_simp_gkm}. Let $\{\alpha_{j_s}\}_{s=1}^{d_j}$ be the characters corresponding to the torus action on $h^{-1}(F_j)$ and $\mathcal{V}_j:=\bigoplus_{s=1}^{d_j} \mathcal{V}(\alpha_{j_s})$. Then $\mathcal{V}_j$ is the tangent space of $M_j$ at $h^{-1}(b_j)$ and $\psi^j \colon \CC^{d_j}/L_j \to h^{-1}(b_j)$ is equivariantly homeomorphic to $\xi^j \colon \mathcal{V}_j\to h^{-1}(b_j) $. Now the following gives the assumption (A1).
\begin{align*}
  \xi^j \colon \mathcal{V}_j\to h^{-1}(b_j)
& \cong \xi^j \colon \bigoplus_{s =1}^{d_j} \mathcal{V}(\alpha_{j_s}) \to h^{-1}(b_j)\\
&=\bigoplus_{s=1}^{d_j}\big{(}\xi^{js} \colon \mathcal{V}(\alpha_{j_s}) \to h^{-1}(b_j)\big{)}.
\end{align*}

 Considering
 $f_{js} \colon h^{-1}(b_j) \to h^{-1}(b_s)$ as a map between two fixed points, we conclude the assumption (A2).

 The Euler class of $\xi^{js} \colon \mathcal{V}(\alpha_{j_s}) \to h^{-1}(b_j)$ is determined by $e_{G}(\xi^{js})=\alpha_{j_s}$.
Since $\{\alpha_{j_s}\}_{s=1}^{d_j}$ are pairwise linearly independent, hence the assumption (A3) follows. 
  \end{proof}

\begin{theorem}\label{thm:buildable_gkm}
Let $\mathcal{K}$ be a buildable simplicial GKM orbifold complex and $\mathcal{E}^*_T\in\{ H^*_T,K^*_T\}$. Then the generalized $T$-equivariant cohomology of $\mathcal{K}$  can be given by
 $$\mathcal{E}^*_T(\mathcal{K} ; \QQ) = \Big \{ (x_j) \in \bigoplus_{j=0}^{m} \mathcal{E}^*_{T}(\{pt\}; \QQ) ~ \big{|} ~ e_T(\xi^{js})~
	\mbox{divides} ~ x_j - f^*_{j s}(x_{s}) ~\mbox{for all}~ s < j\Big \}.$$
\end{theorem}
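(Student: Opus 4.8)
The plan is to read this off directly from Proposition~\ref{prop:equivariant_k-theory} after translating the geometric stratification of a buildable complex into the abstract framework of \eqref{eq_g-tratification}. First I would unpack Definition~\ref{def_filt_simp_gkm}: a buildable $\mathcal{K}$ carries a $T$-invariant stratification $\{pt\}=\mathcal{K}_0\subset\mathcal{K}_1\subset\cdots\subset\mathcal{K}_m=\mathcal{K}$ with each $\mathcal{K}_j$ closed and $\mathcal{K}_j\setminus\mathcal{K}_{j-1}$ equivariantly homeomorphic to $\CC^{d_j}/L_j$. I would then set $B_j=h^{-1}(b_j)\cong\{pt\}$ and take $\xi^j\colon\mathcal{V}_j\to h^{-1}(b_j)$ to be the orbifold $T$-vector bundle whose total space realizes $\CC^{d_j}/L_j$, as in Lemma~\ref{prop_gkm_cond}. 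The essential geometric point is that collapsing $\mathcal{K}_{j-1}$ gives $\mathcal{K}_j/\mathcal{K}_{j-1}\cong\mathrm{Th}(\mathcal{V}_j)$, so that $\mathcal{K}$ is assembled from a point by attaching the disc bundles $D(\mathcal{V}_j)$ along $T$-equivariant maps $S(\mathcal{V}_j)\to\mathcal{K}_{j-1}$; this is exactly the data required in \eqref{eq_g-tratification}.

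With the stratification recognized in this form, I would invoke Lemma~\ref{prop_gkm_cond}, which already verifies hypotheses (A1), (A2) and (A3): the bundle $\xi^j$ splits as $\bigoplus_{s<j}\xi^{js}$ with $\xi^{js}\colon\mathcal{V}(\alpha_{j_s})\to h^{-1}(b_j)$, the attaching maps factor through the fixed-point maps $f_{js}\colon h^{-1}(b_j)\to h^{-1}(b_s)$, and the Euler classes $e_T(\xi^{js})=\alpha_{j_s}$ are pairwise linearly independent, hence non-zero-divisors and pairwise relatively prime in $\mathcal{E}^*_T(\{pt\};\QQ)$. Then Proposition~\ref{prop_gen_cohom} guarantees injectivity of $\iota^*$ and Proposition~\ref{prop:equivariant_k-theory}, applied with $G=T$, identifies its image with $\Gamma_{\mathcal{K}}$. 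Since every $B_j$ is a point, $\mathcal{E}^*_T(B_j;\QQ)=\mathcal{E}^*_T(\{pt\};\QQ)$, so $\Gamma_{\mathcal{K}}$ is precisely the displayed set and $\iota^*$ furnishes the claimed identification.

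I expect no serious obstacle beyond the first, essentially bookkeeping, step: confirming that the buildable stratification genuinely instantiates \eqref{eq_g-tratification}, i.e.\ that each successive quotient is the Thom space of an orbifold $T$-vector bundle over a single fixed point and that the attaching data match (A1)--(A2). Once that translation is in place, the theorem is a formal consequence of Lemma~\ref{prop_gkm_cond} together with Propositions~\ref{prop_gen_cohom} and \ref{prop:equivariant_k-theory}. It is worth flagging where rationality enters: the coefficients must be rational so that the Thom isomorphism of Proposition~\ref{prop_thm_global} is available for the orbifold bundles $\xi^j$ despite the finite isotropy $L_j$, and this is precisely what powers the Mayer--Vietoris and five-lemma arguments behind Proposition~\ref{prop:equivariant_k-theory}.
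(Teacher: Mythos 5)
Your proposal matches the paper's proof: the paper likewise identifies $B_j=h^{-1}(b_j)$ with the fixed points, invokes Lemma~\ref{prop_gkm_cond} to verify (A1)--(A3) for the buildable stratification, and concludes by applying Proposition~\ref{prop:equivariant_k-theory}. Your additional remarks on the Thom-space identification of the successive quotients and on where rational coefficients are needed are consistent elaborations of the same argument.
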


\begin{proof}
By Lemma \ref{prop_gkm_cond}, the $T$-invariant stratification of $\mathcal{K}$ as in \eqref{eq_G-tratification} satisfies conditions (A1), (A2) and (A3). Here $B_j=h^{-1}(b_j)$  corresponds to a fixed point of the $T$-action on $\mathcal{K}$ for $ j \in \{0, 1, \ldots, m\}\}$. Then, the proof follows from Proposition \ref{prop:equivariant_k-theory}.
\end{proof}
 
Next, we discuss the computation of $\mathcal{E}_T^*(\mathcal{K};\ZZ)$ of a divisive simplicial GKM orbifold complex $\mathcal{K}$ with integral coefficients when $\mathcal{E}^*_T= H^*_T$, $K^*_T$ or $MU^*_T$. Note that if  $\mathcal{K}$ is a divisive simplicial GKM orbifold complex then similarly to the proof of Lemma \ref{prop_gkm_cond}, one can show that it has a $T$-invariant stratification $$\{pt\}= \mathcal{K}_0 \subset \mathcal{K}_1 \subset \mathcal{K}_2 \subset \cdots \subset \mathcal{K}_{m} =\mathcal{K},$$ where each $\mathcal{K}_j\setminus \mathcal{K}_{j-1}$ is an invariant cell which is equivariantly homeomorphic to $\CC^{d_j}$ for $j=1,2,\dots,m$. So a divisive simplicial GKM orbifold complex is integrally equivariantly formal.  Moreover, this stratification satisfies conditions (A1), (A2) and (A3). Therefore, using Remark \ref{rmk_non_abe_case}, we can get the following result with integer coefficients.

\begin{theorem}\label{thm:buildable_gkm_2}
Let $\mathcal{K}$ be a divisive simplicial GKM orbifold complex and $\mathcal{E}^*_T\in\{H^*_T,K^*_T,MU^*_T\}$. Then the generalized $T$-equivariant cohomology of $\mathcal{K}$ with integer coefficients can be given by
 $$\mathcal{E}^*_T(\mathcal{K} ; \ZZ) = \Big \{ (x_j) \in \bigoplus_{j=0}^{m} \mathcal{E}^*_{T}(\{pt\}; \ZZ) ~ \big{|} ~ e_T(\xi^{js})~\mbox{divides} ~ x_j - f^*_{j s}(x_{s}) ~\mbox{for all}~ s < j\Big \}.$$
\end{theorem}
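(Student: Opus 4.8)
The plan is to reduce Theorem \ref{thm:buildable_gkm_2} to the already-established machinery of Proposition \ref{prop:equivariant_k-theory}, but working with integer rather than rational coefficients. The crucial observation is that for a \emph{divisive} simplicial GKM orbifold complex all the local groups $L_j$ are trivial, so each stratum $\mathcal{K}_j \setminus \mathcal{K}_{j-1}$ is $T$-equivariantly homeomorphic to $\CC^{d_j}$ rather than a quotient $\CC^{d_j}/L_j$. This means the orbifold $G$-vector bundles appearing in the stratification are genuine (honest) $T$-vector bundles over the fixed points $h^{-1}(b_j)$. Consequently one does not need the rational Thom isomorphism of Proposition \ref{prop_thm_global}, which was forced to invert the orders of the finite groups; instead the ordinary equivariant Thom isomorphism for honest $T$-vector bundles is available with integer coefficients for each of $H^*_T$, $K^*_T$ and $MU^*_T$.

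First I would record that the divisive hypothesis yields, exactly as in the proof of Lemma \ref{prop_gkm_cond}, the $T$-invariant stratification $\{pt\} = \mathcal{K}_0 \subset \cdots \subset \mathcal{K}_m = \mathcal{K}$ together with the splitting of each bundle $\xi^j \colon \mathcal{V}_j \to h^{-1}(b_j)$ into the line bundles $\xi^{js} \colon \mathcal{V}(\alpha_{j_s}) \to h^{-1}(b_j)$; the verification of (A1), (A2), (A3) is identical to that lemma and needs no change, since linear independence of the characters $\{\alpha_{j_s}\}$ is a purely combinatorial fact that is insensitive to the coefficient ring. The second step is to re-run the injectivity statement of Proposition \ref{prop_gen_cohom} and the image computation of Proposition \ref{prop:equivariant_k-theory} with $\ZZ$ in place of $\QQ$, invoking at each inductive stage the integral equivariant Thom isomorphism $\mathcal{E}^*_T(B_j;\ZZ) \xrightarrow{\cong} \mathcal{E}^{*+2d_j}_T(\mathrm{Th}(X_j);\ZZ)$ for the honest $T$-vector bundle $X_j$. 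The Mayer--Vietoris/five-lemma bookkeeping and the long exact cohomology sequences of the cofibrations $\mathcal{K}_{j-1} \to \mathcal{K}_j \to \mathrm{Th}(X_j)$ carry over verbatim. This is precisely the content flagged by the appeal to Remark \ref{rmk_non_abe_case}, which permits the non-abelian (and in particular integral-coefficient) version of the argument whenever the strata are cut out by genuine $G$-vector bundles.

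The main obstacle, and the point that must be handled with care, is ensuring that the Thom isomorphism and the injectivity $\iota^*$ actually hold integrally rather than merely rationally. Two ingredients make this go through: first, the integral equivariant formality of $\mathcal{K}$ (its integral cohomology is torsion-free and concentrated in even degrees, as established just before the theorem statement), which guarantees that the relevant long exact sequences split into short exact sequences and that no extension or torsion obstruction enters; and second, the fact that for an honest $T$-vector bundle the Euler class $e_T(\xi^{js}) = \alpha_{j_s}$ is a non-zero-divisor in $\mathcal{E}^*_T(\{pt\};\ZZ)$ and the classes for distinct $s$ are pairwise coprime, which is what the ``divides'' condition in $\Gamma_{\mathcal{K}}$ requires and which holds integrally because the $\alpha_{j_s}$ are part of a basis-type system coming from the weights at the fixed point. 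I would therefore devote the bulk of the write-up to checking these non-zero-divisor and coprimality statements integrally for each theory $H^*_T$, $K^*_T$, $MU^*_T$ (recalling that in $K^*_T$ and $MU^*_T$ the Euler class is the appropriate exponential/formal-group expression $1 - e^{-\alpha_{j_s}}$ or its cobordism analogue rather than $\alpha_{j_s}$ itself), after which the identification of the image of $\iota^*$ with $\Gamma_{\mathcal{K}}$ follows formally from the integral versions of Propositions \ref{prop_gen_cohom} and \ref{prop:equivariant_k-theory}.
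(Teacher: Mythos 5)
Your proposal is correct and follows essentially the same route as the paper: the divisive hypothesis makes every stratum $\mathcal{K}_j\setminus\mathcal{K}_{j-1}\cong\CC^{d_j}$ with trivial local group, so the bundles $\xi^{j}$ are honest $T$-vector bundles, the verification of (A1)--(A3) is that of Lemma \ref{prop_gkm_cond}, and the integral statement then follows from the $G$-vector-bundle case of Proposition \ref{prop:equivariant_k-theory} (i.e.\ the Harada--Henriques--Holm theorem), exactly as the paper does via Remark \ref{rmk_non_abe_case}. Your additional attention to the integral non-zero-divisor and coprimality checks for the $K$-theoretic and cobordism Euler classes is a reasonable elaboration of a point the paper leaves implicit, but it is not a different argument.
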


\begin{remark}
 Gonzales studies `$\QQ$-filterable spaces' in \cite{Gon}. If a $\QQ$-filterable space is a projective $T$-variety then it has a stratification similar to \eqref{eq_G-tratification} where $\mathcal{K}_j \setminus \mathcal{K}_{j-1}$ is a `rational cell'. Under the assumption of `$T$-skeletal', he studies the GKM theory of $\QQ$-filterable spaces to obtain their Borel equivariant cohomology rings. We also compute some other generalized equivariant cohomology rings, like equivariant $K$-theory and equivariant cobordism rings of these spaces. 
\end{remark}	

Now we give an example of a simplicial GKM orbifold complex such that the simplicial GKM graph complex associated with it is the same as the simplicial GKM graph complex of Example \ref{ex:gkm_simp_graph}. 
\begin{example}\label{ex:equi_cohom_gkmplx}
	Let $\mathcal{K}$ be the collection of 3 GKM orbifolds $\mathcal{O}_1,\mathcal{O}_2,\mathcal{O}_3$ where $$\mathcal{O}_1:=\{[z_0:\cdots:z_5]\in \WW P(c_0,\ldots,c_5)~|~ z_3=z_4=z_5=0\} \cong \WW P(c_0,c_1,c_2),$$
	$$\mathcal{O}_2:=\{[z_0:\cdots:z_5]\in \WW P(c_0,\ldots,c_5)~|~ z_2=z_4=z_5=0\} \cong \WW P(c_0,c_1,c_3)\mbox{ and}$$
	$$\mathcal{O}_3:=\{[z_0:\cdots:z_5]\in \WW P(c_0,\ldots,c_5)~|~ z_2=z_3=z_4=z_5=0\} \cong \WW P(c_0,c_1).$$ 
 
 Then $\mathcal{O}_3=\mathcal{O}_1\cap \mathcal{O}_2$. Consider the action of $T^4$ on $\WW P(c_0,\ldots,c_5)$ is defined by
 \begin{equation}\label{weight_T}
 (t_1,t_2,t_3,t_4)[z_0:\cdots :z_5]=[t_1t_2z_0:t_1t_3z_1:t_1t_4z_2:t_2t_3z_3:t_2t_4z_4:t_3t_4z_5 ].
 \end{equation}
 This induces $T^4$-actions on $\mathcal{O}_1,\mathcal{O}_2 \mbox{ and } \mathcal{O}_3$. The spaces $\mathcal{O}_1,\mathcal{O}_2 \mbox{ and } \mathcal{O}_3$ are all GKM orbifolds with the above $T^4$-action and hence $\mathcal{K}$ is a simplicial GKM orbifold complex.

Let $[e_i] \in \WW P(c_0,\ldots,c_5)$ such that the only non-zero entry is the $i$th coordinate. So, $[e_i]$ is a fixed point of the $T^4$-action for $i=1, \ldots, 5$. Let $v_i$ be the vertex corresponding to $[e_i]$.
  The graph corresponding to $\mathcal{O}_1$ is the triangle $v_0v_1v_2$ and the graph corresponding to $\mathcal{O}_2$ is the triangle $v_0v_1v_3$ and they have a common edge $(v_0v_1)$ which is the graph of $\mathcal{O}_3$. 

Thus the simplicial graph complex associated to $\mathcal{K}$ is the same as the simplicial GKM graph complex $\mathcal{G}$ in Example \ref{ex:gkm_simp_graph}. Now if $c_i$ divides $c_{i-1}$ for all $i\in\{1,2,3\}$ then by \cite[Theorem 3.19]{BS_Gra}, one can show that $\mathcal{K}$ has a $T^4$-invariant cell-structure and $H^{odd}(\mathcal{K};\ZZ)=0$. Therefore, by Theorem \ref{thm:buildable_gkm_2} 
	\begin{align}\label{app_dks}
		 \mathcal{E}_{T^4}^{*}(\mathcal{K};\mathbb{Z})  = &\{(x_j)\in \bigoplus_{j=0}^{3}\mathbb{Z}[y_1,y_2,y_3,y_4] ~\big{|}~e_{T^4}(\xi^{js})~\mbox{divides}~(x_j-x_s)\\ & \text{ if }s<j <3 ~\mbox{and} ~(s,j) \neq (2, 3)\} \nonumber.  
	\end{align}
 Next, we discuss how to compute $e_{T^4}(\xi^{js})$. Using \eqref{weight_T} the character of the one-dimensional representation for the bundle $\xi^{js}$ is given by
 \begin{equation}\label{eq_one_dim_rep}
 	(t_1,t_2,t_3,t_4)\to \mathcal{T}_{s}(\mathcal{T}_j)^{-\frac{ c_s}{ c_j}},
 \end{equation}
for $s<j<3$ and $(s,j)\neq (2,3)$, where $\mathcal{T}_0=t_1t_2, \mathcal{T}_1=t_1t_3, \mathcal{T}_2=t_1t_4, \mathcal{T}_3=t_2t_3$. 
 
 We recall that $H_{T^4}^*(\{pt\};\ZZ)=H^{*}(BT^4;\ZZ)\cong \ZZ[y_1, y_2,y_3, y_4]$ where $y_1, y_2,y_3,y_4$ be the standard basis of $H^2(BT^4;\ZZ)$. Also
 $K^*_{T^4} (\{pt\};\ZZ) \cong R(T^4)[z, z^{-1}],$ where $R(T^4)$ is the complex representation ring of $T^4$ and $z$ is the Bott element in $K^{-2}(\{pt\})$. Note that $R(T^4)$ is isomorphic to the ring of Laurent polynomials with $4$-variables, that is $R(T^4) \cong \ZZ[\zeta_1, \ldots, \zeta_4]_{(\zeta_1 \cdots \zeta_4)}$, where  $\zeta_i$ is the irreducible representation corresponding to the projection on the $i$-th factor, see \cite{Hus}. Let  $\mathcal{Y}_0:=y_1+y_2, \mathcal{Y}_1:=y_1+y_3, \mathcal{Y}_2:=y_1+y_4, \mathcal{Y}_3:=y_2+y_3$ and
 $\mathcal{Z}_0:=\zeta_1\zeta_2, \mathcal{Z}_1:=\zeta_1\zeta_3, \mathcal{Z}_2:=\zeta_1\zeta_4, \mathcal{Z}_3:=\zeta_2 \zeta_3$. Therefore, using \eqref{eq_one_dim_rep}, one has the following.
 \begin{align}\label{eq_eu_cl}
e_{T^4}(\xi^{js}) = 
     \begin{cases}
     \mathcal{Y}_{s} - d_{js}\mathcal{Y}_{j} & \quad\text{in}~ H^2_{T^4}(\{pt\}; \ZZ)\\
       1-  \mathcal{Z}_{s} \mathcal{Z}_{j}^{-d_{js}} & \quad\text{in}~  K^0_{T^4}(\{pt\}; \ZZ) \\
    e_{T^4}(\mathcal{Z}_{s} \mathcal{Z}_{j}^{d_{js}})  &\quad\text{in}~ MU^2_{T^4}(\{pt\}; \ZZ),
     \end{cases}
\end{align} 
where $d_{js}=\frac{c_s}{c_j} \in \ZZ$ for $s<j <3$ and $(s,j) \neq (2, 3)$.
\end{example}

\begin{lemma}\label{lem_module1}
Let $\mathcal{K}$ be a divisive simplicial GKM orbifold complex. Then, for $\mathcal{E}^*_{T} = H^*_{T}, K^*_{T}, ~\mbox{or} ~ MU^*_{T}$, there exists a non-canonical isomorphism of $\mathcal{E}_T^{*}(\{pt\};\ZZ)$-module
     $$\mathcal{E}_T^{*}(\mathcal{K};\ZZ)\cong \prod_{j=0}^m e_T(\xi^j)\mathcal{E}_T^{*}(\{pt\};\ZZ).$$
\end{lemma}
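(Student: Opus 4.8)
The plan is to exhibit an explicit free $\mathcal{E}_T^{*}(\{pt\};\ZZ)$-basis $\{\tau_0,\dots,\tau_m\}$ of $\mathcal{E}_T^{*}(\mathcal{K};\ZZ)$ whose $j$-th member restricts to the $j$-th fixed point as the equivariant Euler class $e_T(\xi^j)$ and to all earlier fixed points as $0$ (these are the ``flow-up'' classes), and then to send the generator $e_T(\xi^j)$ of the $j$-th factor $e_T(\xi^j)\mathcal{E}_T^{*}(\{pt\})$ to $\tau_j$. Since $\mathcal{K}$ is divisive, as noted after Theorem \ref{thm:buildable_gkm} it carries the stratification \eqref{eq_G-tratification} with $\mathcal{K}_j\setminus\mathcal{K}_{j-1}\cong_T \CC^{d_j}$, and each $\xi^j\colon \CC^{d_j}\to B_j=h^{-1}(b_j)\cong\{pt\}$ is a \emph{genuine} $T$-vector bundle (the local group $L_j$ is trivial). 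Hence the integral equivariant Thom isomorphism is available without passing to $\QQ$, and $e_T(\xi^j)=\prod_{s<j}e_T(\xi^{js})$ is a non-zero-divisor by assumption (A3).

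First I would produce, for each $j$, the Thom class $u_j\in \mathcal{E}_T^{*}(\mathcal{K}_j,\mathcal{K}_{j-1})\cong \widetilde{\mathcal{E}}_T^{*}(\text{Th}(X_j))$ and let $\bar\tau_j\in\mathcal{E}_T^{*}(\mathcal{K}_j)$ be its image under the map of the pair. By naturality of the Thom class, $\bar\tau_j|_{B_j}=s^{*}u_j=e_T(\xi^j)$, while $\bar\tau_j|_{\mathcal{K}_{j-1}}=0$ because the collapse $\mathcal{K}_j\to\text{Th}(X_j)$ sends $\mathcal{K}_{j-1}$ to the basepoint. Next I would run an induction on $k$ along the cofibrations $\mathcal{K}_{k-1}\to\mathcal{K}_k\to\text{Th}(X_k)$. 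Since the cells are even complex-dimensional and $\mathcal{E}_T^{*}(\{pt\};\ZZ)$ is concentrated in even degrees for each of $H_T^{*}$, $K_T^{*}$ and $MU_T^{*}$, the relative groups $\mathcal{E}_T^{*}(\mathcal{K}_k,\mathcal{K}_{k-1})\cong\mathcal{E}_T^{*-2d_k}(\{pt\})$ and, inductively, $\mathcal{E}_T^{*}(\mathcal{K}_{k-1})$ are even, so every connecting homomorphism vanishes and the long exact sequence of the pair breaks into short exact sequences
$$0\to \mathcal{E}_T^{*}(\mathcal{K}_k,\mathcal{K}_{k-1})\to \mathcal{E}_T^{*}(\mathcal{K}_k)\to \mathcal{E}_T^{*}(\mathcal{K}_{k-1})\to 0$$
of $\mathcal{E}_T^{*}(\{pt\})$-modules, which split because the quotient is free. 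Thus $\mathcal{E}_T^{*}(\mathcal{K}_k)$ is free with basis the restrictions of $\bar\tau_0,\dots,\bar\tau_k$, the restriction $\mathcal{E}_T^{*}(\mathcal{K}_k)\to\mathcal{E}_T^{*}(\mathcal{K}_{k-1})$ is surjective, and each $\bar\tau_j$ lifts to a class $\tau_j\in\mathcal{E}_T^{*}(\mathcal{K})$. Taking $k=m$ shows $\{\tau_0,\dots,\tau_m\}$ is a free basis of $\mathcal{E}_T^{*}(\mathcal{K})$ with $\iota^{*}(\tau_j)=(x_i^{(j)})_i$, where $x_i^{(j)}=0$ for $i<j$ and $x_j^{(j)}=e_T(\xi^j)$.

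With the basis in hand, I would define the $\mathcal{E}_T^{*}(\{pt\})$-module map
$$\Phi\colon \prod_{j=0}^{m} e_T(\xi^j)\,\mathcal{E}_T^{*}(\{pt\};\ZZ)\longrightarrow \mathcal{E}_T^{*}(\mathcal{K};\ZZ),\qquad \big(e_T(\xi^j)a_j\big)_j\longmapsto \sum_{j=0}^{m} a_j\,\tau_j.$$
Because each $e_T(\xi^j)$ is a non-zero-divisor, the $j$-th factor $e_T(\xi^j)\mathcal{E}_T^{*}(\{pt\})$ is free of rank one on $e_T(\xi^j)$, so $\Phi$ is well defined and carries the evident free basis of the domain bijectively onto $\{\tau_j\}$; hence it is an isomorphism. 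It is non-canonical precisely because it depends on the chosen splittings and lifts of the $\bar\tau_j$.

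The main obstacle is the integral formality input for the two generalized theories: I must justify that $K_T^{*}(\{pt\};\ZZ)$ and $MU_T^{*}(\{pt\};\ZZ)$ are concentrated in even degrees and that the integral equivariant Thom isomorphism holds for the honest representations $\CC^{d_j}$, so that the connecting maps genuinely vanish and the short exact sequences split over $\mathcal{E}_T^{*}(\{pt\})$. For $H_T^{*}$ this is the content underlying Theorem \ref{thm_eq_formal}, but for $K_T^{*}$ and $MU_T^{*}$ it rests on the even concentration of the coefficient rings (as recorded in Example \ref{ex:equi_cohom_gkmplx}) together with the Bott-type and Thom isomorphisms for complex representations. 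A secondary, routine point is the degree bookkeeping making $\Phi$ grading-compatible, namely a shift by $\deg e_T(\xi^j)=2d_j$ for $H_T^{*}$ and $MU_T^{*}$ and no shift for $K_T^{*}$, which follows once freeness is established.
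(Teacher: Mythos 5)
Your argument is correct, and it is in substance the same as the paper's: the paper's proof of this lemma is a one-line appeal to Lemma \ref{prop_gkm_cond}, Theorem \ref{thm:buildable_gkm_2} and the second part of \cite[Theorem 2.3]{HHH}, and what you have written is precisely an unpacking of that cited result in the divisive setting --- Thom classes for the cells $\CC^{d_j}$, the induced ``flow-up'' classes $\tau_j$ with $\tau_j|_{B_i}=0$ for $i<j$ and $\tau_j|_{B_j}=e_T(\xi^j)$, splitting of the long exact sequences of the pairs $(\mathcal{K}_k,\mathcal{K}_{k-1})$, and the resulting free basis.

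The one point where your route genuinely diverges from the cited argument is the justification that the connecting homomorphisms vanish. You deduce this from even-degree concentration of $\mathcal{E}_T^{*}(\{pt\};\ZZ)$ and of the relative groups; for $H_T^{*}$ and $K_T^{*}$ this is immediate, but for $MU_T^{*}$ the statement that $MU_T^{*}(\{pt\})$ is concentrated in even degrees is a nontrivial theorem (Comeza\~na's result for compact abelian groups, recorded in \cite{May}), and you correctly flag it as the main unresolved input of your proof. The argument in \cite{HHH} sidesteps this entirely: one composes $\mathcal{E}_T^{*}(\mathcal{K}_k,\mathcal{K}_{k-1})\to\mathcal{E}_T^{*}(\mathcal{K}_k)\to\mathcal{E}_T^{*}(B_k)$, identifies it via the Thom isomorphism with multiplication by $e_T(\xi^k)$, and uses the non-zero-divisor hypothesis (assumption (A3), verified in Lemma \ref{prop_gkm_cond}) to conclude that the first map is injective and hence that the connecting map is zero. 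That version needs no evenness of the coefficient ring and is the reason the paper can state the lemma uniformly for $H_T^{*}$, $K_T^{*}$ and $MU_T^{*}$ with only (A1)--(A3) as hypotheses; I would recommend substituting it for your evenness argument (or at least citing Comeza\~na explicitly for the $MU_T^{*}$ case). With that adjustment the rest of your construction of the non-canonical isomorphism $\Phi$ is complete and correct.
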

\begin{proof}
 This follows from Lemma \ref{prop_gkm_cond}, Theorem \ref{thm:buildable_gkm_2} and the second part of \cite[Theorem 2.3]{HHH}.   
\end{proof}

\begin{theorem}\label{thm_module2}
    Let $\mathcal{K}$ be a divisive simplicial GKM orbifold complex and let $b_0<b_1<b_2<\dots<b_m$ be the ordering of the vertices in simplicial GKM graph complex. For each $j\in\{0,1,\dots,m\}$ let $\phi_j\in \mathcal{E}_T^{*}(\mathcal{K};\ZZ)$ be an element such that $(\phi_j)_s=0 \text{ for } s<j$ and $(\phi_j)_j$ is a scalar multiple of $e_T(\xi^j)$. Then the set $\{\phi_j\}_{j=0}^m$ generates $\mathcal{E}_T^{*}(\mathcal{K};\ZZ)$ freely as a module over $\mathcal{E}_T^{*}(\{pt\};\ZZ)$.
\end{theorem}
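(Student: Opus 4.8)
The plan is to use Theorem~\ref{thm:buildable_gkm_2} to identify $\mathcal{E}_T^{*}(\mathcal{K};\ZZ)$ with the concrete submodule
$$\Gamma_{\mathcal{K}}=\Big\{(x_j)\in\bigoplus_{j=0}^{m} R ~\big|~ e_T(\xi^{js})~\text{divides}~x_j-f_{js}^{*}(x_s)~\text{for all}~s<j\Big\}\subseteq \bigoplus_{j=0}^{m} R,$$
where $R:=\mathcal{E}_T^{*}(\{pt\};\ZZ)$, and then to run the standard upper-triangular basis argument against the filtration of $\Gamma_{\mathcal{K}}$ by vanishing of the initial coordinates. Two facts from Section~\ref{sec_gen_cohom} drive everything: by assumption (A1) the bundle $\xi^j$ splits, so multiplicativity of Euler classes gives $e_T(\xi^j)=\prod_{s<j}e_T(\xi^{js})$; and by assumption (A3) these factors are pairwise relatively prime non-zero-divisors, whence $e_T(\xi^j)$ is itself a non-zero-divisor.

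First I would establish the governing divisibility lemma. For each $k$ set $\mathcal{F}_k:=\{x\in\Gamma_{\mathcal{K}}~|~x_s=0~\text{for all}~s<k\}$. If $x\in\mathcal{F}_k$ then for every $s<k$ the defining relation of $\Gamma_{\mathcal{K}}$ reads $e_T(\xi^{ks})$ divides $x_k-f_{ks}^{*}(x_s)=x_k$, since $x_s=0$; because the $\{e_T(\xi^{ks})\}_{s<k}$ are pairwise relatively prime, their product $e_T(\xi^k)$ divides $x_k$. By hypothesis $\phi_k$ lies in $\mathcal{F}_k$ and $(\phi_k)_k$ is a unit multiple of $e_T(\xi^k)$ (the scalar in the statement being a unit), so $(\phi_k)_k$ generates the rank-one free module $e_T(\xi^k)R$.

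With this lemma, spanning and independence both follow by induction on the coordinate index. For spanning, given $x\in\Gamma_{\mathcal{K}}=\mathcal{F}_0$ I would subtract multiples of $\phi_0,\phi_1,\dots$ one at a time: at stage $k$ the remainder $y:=x-\sum_{s<k}a_s\phi_s$ lies in $\mathcal{F}_k$, so the lemma puts $y_k$ in $e_T(\xi^k)R$, and choosing $a_k\in R$ with $a_k(\phi_k)_k=y_k$ pushes the remainder into $\mathcal{F}_{k+1}$ (subtracting $a_k\phi_k$ clears the $k$-th coordinate and leaves the earlier, already vanishing, ones untouched). Since $\mathcal{F}_{m+1}=\{0\}$ this terminates with $x=\sum_{j=0}^{m}a_j\phi_j$. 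For independence, feeding a relation $\sum_j a_j\phi_j=0$ coordinate by coordinate from the bottom isolates $a_k(\phi_k)_k=0$ at each stage (using $(\phi_j)_k=0$ for $j>k$ and $a_0=\dots=a_{k-1}=0$ by induction); as $(\phi_k)_k$ is a unit times the non-zero-divisor $e_T(\xi^k)$, this forces $a_k=0$. Hence the $\phi_j$ form a free $R$-basis.

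The main obstacle is the heart of the divisibility lemma: upgrading ``each $e_T(\xi^{ks})$ divides $x_k$'' to ``the product $e_T(\xi^k)$ divides $x_k$.'' This is precisely where the pairwise-relative-primality clause of (A3) is indispensable, and it must be read as a genuine coprimality (Chinese-remainder-type) property of the coefficient rings $H_T^{*}(\{pt\})$, $K_T^{*}(\{pt\})$ and $MU_T^{*}(\{pt\})$; over $H_T^{*}(\{pt\})$ this is ordinary unique-factorization divisibility, while over $K_T^{*}$ and $MU_T^{*}$ it rests on the relative-primality hypothesis imported from (A3) together with the machinery of \cite{HHH}. The remaining triangular bookkeeping is routine, and the non-zero-divisor clause of (A3) is what secures both the solvability in the spanning step and the cancellation in the independence step; the existence of the $\phi_j$ themselves, assumed in the statement, can also be recovered from the module decomposition of Lemma~\ref{lem_module1}.
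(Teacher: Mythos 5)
Your argument is correct and is essentially the paper's proof: the paper disposes of this theorem by citing Lemma \ref{prop_gkm_cond}, Theorem \ref{thm:buildable_gkm_2} and \cite[Theorem 4.1]{HHH}, and what you have written is precisely the upper-triangular spanning/independence argument behind that cited theorem, applied to $\Gamma_{\mathcal{K}}$. Your reading of ``scalar multiple'' as ``unit multiple'' and of pairwise relative primality in (A3) as the each-divides-implies-product-divides property are exactly the conventions of \cite{HHH}, so no gap arises relative to the paper.
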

\begin{proof}
 This follows from Lemma \ref{prop_gkm_cond}, Theorem \ref{thm:buildable_gkm_2} and  \cite[Theorem 4.1]{HHH}. 
\end{proof}

\begin{remark}
Lemma \ref{lem_module1} and Theorem \ref{thm_module2} hold for a buildable simplicial GKM orbifold complex for $\mathcal{E}^*_{T} \in \{H^*_{T}, K^*_{T}\}$  with rational coefficients.    
\end{remark}







\vspace{1cm} 

 \noindent  {\bf Acknowledgment.} 
 The first author thanks `Indian Institute of Technology Madras' for PhD fellowship. The second author thanks `Science and Engineering Research Board India' for research grants.

\bibliographystyle{abbrv}

\end{document}